\def\LaTeX{\leavevmode L\raise.42ex
    \hbox{\kern-.3em\size{\sfsize}{0pt}\selectfont A}\kern-.15em\TeX}
\newcommand{\BibTeX}{{\rm B\kern-.05em{\sc
          i\kern-.025emb}\kern-.08em\TeX}}
\newtheorem{thm}{Theorem}[section]
\newtheorem{prop}[thm]{Proposition}
\newtheorem{defin}[thm]{Definition}
\newtheorem{lemma}[thm]{Lemma}
\newtheoremstyle{definition2}{\topsep}{\topsep}%
     {}
     {}
     {\bfseries}
     {.}
     {.5em}
     {\thmnumber{(#2)}\thmname{ #1}\thmnote{ #3}}
\theoremstyle{definition}
\newtheorem{rem}[thm]{Remark}
\def\ep{\varepsilon}
\def\vphi{\varphi}
\def\Feh{\mathcal{F}}
\def\Kah{\mathcal{K}}
\def\Mah{\mathcal{M}}
\def\Meh{\mathcal{M}}
\def\Peh{\mathcal{P}}
\def\N{\mathbb{N}}
\def\Z{\mathbb{Z}}
\def\R{\mathbb{R}}
\def\hbar{\bar{h}}
\def\dist{{\rm dist}}
\def\leq{\leqslant}
\def\geq{\geqslant}
\newcommand{\rst}[1]{\ensuremath{{\mathbin |}%
\raise-.5ex\hbox{$#1$}}}
\title[Competition-diffusion systems and optimal partition problems]{Sign-changing solutions of competition-diffusion elliptic systems and optimal partition problems}
\author[H. Tavares]{Hugo Tavares}
\address[H. Tavares]{University of Lisbon, CMAF, Faculty of Science, Av. Prof. Gama Pinto 2, 1649-003
Lisboa, Portugal}
\email{htavares@ptmat.fc.ul.pt}
\author[S. Terracini]{Susanna Terracini}
\address[S. Terracini]{Dipartimento di Matematica e Applicazioni, Universit`a degli Studi di Milano-Bicocca,
via Bicocca degli Arcimboldi 8, 20126 Milano, Italy }
\email{susanna.terracini@unimib.it }
\keywords{elliptic systems, optimal partition problems, sign-changing solutions, minimax methods}
\date{\today}
\begin{document}

\selectlanguage{english}

\begin{abstract}
In this paper we prove the existence of infinitely many sign-changing solutions for the system of $m$ Schr\"odinger equations with competition interactions
$$
-\Delta u_i+a_i u_i^3+\beta u_i \sum_{j\neq i} u_j^2 =\lambda_{i,\beta} u_i \quad u_i\in H^1_0(\Omega), \quad i=1,\ldots,m
$$
where $\Omega$ is a bounded domain, $\beta>0$ and $a_i\geq 0\ \forall i.$ Moreover, for $a_i=0$, we show a relation between critical energies associated with this system and the optimal partition problem
$$
\mathop{\inf_{\omega_i\subset \Omega \text{ open}}}_{\omega_i\cap \omega_j=\emptyset\forall i\neq j} \sum_{i=1}^{m} \lambda_{k_i}(\omega_i),
$$
where $\lambda_{k_i}(\omega)$ denotes the $k_i$--th eigenvalue of $-\Delta$ in $H^1_0(\omega)$. In the case $k_i\leq 2$ we show that the optimal partition problem appears as a limiting critical value, as the competition parameter $\beta$ diverges to $+\infty$.
\end{abstract}

\maketitle
\section{Introduction}
Let $\Omega$ be a bounded regular domain in $\R^N$, $N\geq 2$, and let $m\in \N$. In this paper we are concerned with the study of the following system of Schr\"odinger equations with competitive interactions
\begin{equation}\label{eq:Bose-Einstein}
\left\{
\begin{array}{l}
-\Delta u_i+a_i u_i^3+\beta u_i \sum_{j\neq i} u_j^2 =\lambda_{i,\beta} u_i\\[10pt]
u_i\in H^1_0(\Omega), \quad i=1,\ldots,m,
\end{array}
\right.
\end{equation}
where $\beta>0$, $a_i\geq 0$ and $\lambda_{i,\beta}$ are real parameters.

The first purpose of this paper is to prove the following result.
\begin{thm}\label{thm:main1}
For each $\beta>0$ and $a_1,\ldots,a_m\geq 0$, there exist infinitely many sign-changing solutions of \eqref{eq:Bose-Einstein}.
\end{thm}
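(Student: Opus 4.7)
The plan is to cast \eqref{eq:Bose-Einstein} as an $L^2$-constrained variational problem on a product of spheres and to extract a doubly indexed family of sign-changing solutions by an equivariant Lusternik-Schnirelmann argument. Consider on
\begin{equation*}
\Meh = \{U = (u_1,\ldots,u_m)\in H^1_0(\Omega)^m : \|u_i\|_{L^2(\Omega)} = 1,\ i=1,\ldots,m\}
\end{equation*}
the functional
\begin{equation*}
E_\beta(U) = \tfrac12\sum_i\int_\Omega |\nabla u_i|^2 + \tfrac14\sum_i a_i\int_\Omega u_i^4 + \tfrac{\beta}{4}\sum_{i\neq j}\int_\Omega u_i^2 u_j^2.
\end{equation*}
By the Lagrange multiplier rule, constrained critical points solve \eqref{eq:Bose-Einstein}, the $\lambda_{i,\beta}$ appearing as the multipliers of the constraints $\int u_i^2=1$. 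In the subcritical range, $E_\beta$ is $C^1$, bounded below, coercive in $\sum_i\|\nabla u_i\|_{L^2}^2$, and satisfies the Palais-Smale condition on $\Meh$. The group $\Z_2^m$ acts on $\Meh$ by $(u_i)\mapsto(\ep_i u_i)$ with $\ep_i\in\{\pm1\}$, leaving $E_\beta$ invariant.

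For each multi-index $\mathbf{k}=(k_1,\ldots,k_m)\in\N^m$ with $k_i\geq 2$, I would introduce the minimax value
\begin{equation*}
c_\mathbf{k}(\beta) = \inf_{A_1\times\cdots\times A_m}\ \sup_{(u_1,\ldots,u_m)\in A_1\times\cdots\times A_m} E_\beta(u_1,\ldots,u_m),
\end{equation*}
the infimum being taken over products of compact centrally symmetric subsets $A_i$ of the $L^2$-unit sphere $S_i\subset H^1_0(\Omega)$ with Krasnoselski genus $\g(A_i)\geq k_i$. A standard equivariant deformation argument on the product of spheres, combined with Palais-Smale, shows that each $c_\mathbf{k}(\beta)$ is a critical value of $E_\beta$ restricted to $\Meh$; comparison with the linear Dirichlet spectrum gives $c_\mathbf{k}(\beta)\to+\infty$ as $|\mathbf{k}|\to\infty$, producing in principle an infinite family of distinct solutions.

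To force each component to change sign, I would use the fact that on $S_i$ the set $K_i$ of non-sign-changing functions retracts $\Z_2$-equivariantly onto the antipodal orbit of the principal Dirichlet eigenfunction, so $\g(K_i)=1$. Since $k_i\geq 2$, every admissible $A_i$ must intersect $S_i\setminus K_i$, and the natural goal is to localise the whole minimax on the open $\Z_2^m$-invariant subset
\begin{equation*}
\Meh_{sc} = \{U\in\Meh : u_i^+\not\equiv 0 \text{ and } u_i^-\not\equiv 0,\ i=1,\ldots,m\}.
\end{equation*}
This is the main obstacle: a descending pseudogradient trajectory for $E_\beta$ on $\Meh$ might a priori drive some component into the constant-sign stratum $\{u_i^+\equiv 0\}\cup\{u_i^-\equiv 0\}$. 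I would address this in the spirit of Bartsch-Wang, or by introducing a sign-changing Nehari-type constraint adapted to the coupled system, and I would try to show by induction on the number of components constrained to have constant sign that the minimax values obtained over such degenerate subclasses lie strictly below $c_\mathbf{k}(\beta)$. Combined with a careful $\Z_2^m$-equivariant modification of the pseudogradient field that preserves $\Meh_{sc}$, this should produce critical points of $E_\beta|_\Meh$ lying in $\Meh_{sc}$, i.e., solutions of \eqref{eq:Bose-Einstein} sign-changing in every component, along the sequence $|\mathbf{k}|\to\infty$.
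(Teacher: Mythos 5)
Your overall strategy (constrained minimax for $E_\beta$ on the product of $L^2$-spheres, a $\Z_2^m$-equivariant index, divergence of the levels by comparison with the Dirichlet spectrum) is the same as the paper's, but three steps contain genuine gaps. First, for $N\geq 5$ the quartic terms make $E_\beta$ equal to $+\infty$ on part of $\Meh$, so it is not of class $C^1$ there; your caveat ``in the subcritical range'' silently restricts to $N\leq 4$, while the theorem is claimed for all $N\geq 2$. The paper resolves this by replacing $t\mapsto t$ and $t\mapsto t^3$ with subcritically growing odd truncations $f_n,g_n$, running the whole variational scheme for the truncated system, proving an $L^\infty$ a priori bound on the resulting critical points \emph{uniformly in $n$} via a Brezis--Kato iteration, and finally choosing $n$ larger than that bound so that the truncated solutions solve the original system. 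Second, your minimax class of \emph{products} $A_1\times\cdots\times A_m$ with $\gamma(A_i)\geq k_i$ is not invariant under the descending flow: $\eta(1,A_1\times\cdots\times A_m)$ is in general no longer a product, so the ``standard equivariant deformation argument'' does not close. The paper's fix is the vector genus built on a Borsuk--Ulam theorem for products of spheres (Dzedzej--Idzik--Izydorek): the class $\Gamma^{(k_1,\ldots,k_m)}$ contains the products you use but is in addition stable under equivariant deformations, which is exactly what the deformation lemma requires.

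Third, the sign-changing localisation --- which you rightly identify as the main obstacle --- remains a plan rather than a proof (``I would try to show\dots'', ``this should produce\dots''), and the comparison of minimax values over degenerate subclasses that you propose is not obviously workable. The paper's mechanism is concrete and different: the minimax is defined as $\inf_A\sup_{A\setminus\Peh_\delta}J$, where $\Peh_\delta$ is an $L^2$-neighbourhood of the union of the cones $\{u:\pm u_i\geq0\}$, and the gradient is replaced by a pseudogradient $V=Id-K$, where $K(u)$ is the solution of an auxiliary coercive system; a strong-maximum-principle argument shows that $K(\Peh_\delta)\subseteq\Peh_{\delta/2}$ at the relevant energy levels, so $\Peh_\delta$ is positively invariant under the flow and the Palais--Smale sequences produced by the deformation argument stay at $L^2$-distance at least $\delta$ from the cones. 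Without a quantitative invariance statement of this kind, your argument does not deliver critical points in $\Meh_{sc}$.
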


In this context, a vector solution $u=(u_1,\ldots,u_m)\in H^1_0(\Omega;\R^m)$ is said to be \emph{sign-changing} if $u_i^+,u_i^-\not\equiv 0$ for every $i$. We stress that, for each $\beta>0$, $\lambda_{i,\beta}$ is not fixed \emph{a priori}; instead, by the statement ``$u$ is a solution of \eqref{eq:Bose-Einstein}'' we mean that there exists $(u,\lambda)$ such that \eqref{eq:Bose-Einstein} holds. Assuming enough regularity on the solution, clearly $\lambda_{i,\beta}$ will depend on $u$ through the relation
$$
\lambda_{i,\beta}=\frac{\int_\Omega (|\nabla u_i|^2+a_iu_i^4 + \beta u_i^2 \sum_{j\neq i} u_j^2)\, dx}{\int_\Omega u_i^2\, dx}.
$$

System \eqref{eq:Bose-Einstein} arises in the study of many physical phenomena, such as the study of standing waves in a mixture of Bose-Einstein condensates in $m$ different hyperfine states. The parameters $a_i$ (called the intraspecies scattering length) represent the self-interactions of each state; when $a_i>0$ this is called the defocusing case, in opposition to the focusing one, when $a_i<0$. As for the parameter $\beta$ (the interspecies scattering length), it represents the interaction between unlike particles. Since we assume $\beta>0$, the interaction is of repulsive type.

In the last few years, several mathematical questions have been studied around system \eqref{eq:Bose-Einstein}. When it comes to existence results in bounded domains, all results presented in the literature concern the case of $m=2$ equations and $N\leq 3$. The authors, in collaboration with Noris and Verzini \cite{l2genus}, have shown the existence of positive solutions in the defocusing case $a_1=a_2=a>0$. In the focusing case $a_1=a_2=a<0$, for $\lambda_{1,\beta}\equiv \lambda_{2,\beta}\equiv \lambda<0$ (fixed {\it a priori}), Dancer, Wei and Weth \cite{dancerweiweth} have shown the existence of infinitely many positive solutions of \eqref{eq:Bose-Einstein}, while for $\lambda>0$ the same result was proved by Noris and Ramos \cite{norisramos}. In all these works the fact that the system is invariant under the transformation $(u_1,u_2)\mapsto (u_2,u_1)$ plays a crucial role. We would also like to mention, always in the focusing case, the works by Bartsch, Dancer and Wang \cite{bartschdancerwang} for local and global bifurcation results in terms of the parameter $\beta$, and the results of Domingos and Ramos \cite{domingosramos} concerning the existence of positive solutions for some $\lambda_{1,\beta}\equiv \lambda_1\neq \lambda_2\equiv \lambda_{2,\beta}$. Our existence result of sign-changing solutions for systems of type \eqref{eq:Bose-Einstein} is, up to our knowledge, new.

Another interesting feature of system \eqref{eq:Bose-Einstein} is the asymptotic study of its solutions as $\beta\to +\infty$. Although everything of what we are about to say holds true in a more general framework, let us focus our attention at this point to the case where $a_i=0\ \forall i$ in \eqref{eq:Bose-Einstein}, that is:
\begin{equation}\label{eq:Bose-Einstein_omega_i=0}
\left\{
\begin{array}{l}
-\Delta u_i+\beta u_i \sum_{j\neq i} u_j^2 =\lambda_{i,\beta} u_i\\[10pt]
u_i\in H^1_0(\Omega), \quad i=1,\ldots,m.
\end{array}
\right.
\end{equation}
As mentioned before, $\beta>0$ is of repulsive type, and it has been shown (see for example \cite{CL_regularity, nttv1, weiweth}, among others) that in several situations it occurs what is called phase separation, which means that the limiting profiles (as $\beta\to +\infty$) have disjoint supports. In particular in \cite{nttv1} it is proved that if $\{u_\beta\}_\beta=\{(u_{1,\beta},\ldots, u_{m,\beta})\}_\beta$ is a family of solutions of \eqref{eq:Bose-Einstein_omega_i=0} uniformly bounded in $L^\infty$--norm, and $\{\lambda_{i,\beta}\}_\beta$ is bounded in $\R$ for all $i$, then there exists $\bar u=(\bar u_1,\ldots,\bar u_m)$ such that $\bar u_i\cdot \bar u_j\equiv 0$ in $\Omega$ $\forall i\neq j$ and, up to a subsequence, $u_{i,\beta}\to \bar u_i$ strongly in $H^1_0(\Omega)\cap C^{0,\alpha}(\overline \Omega)$. Moreover,

\begin{equation}\label{eq:limiting_system}
-\Delta \bar u_i=\lambda_i\bar u_i \quad \text{ in the open set } \{u_i\neq 0\},
\end{equation}
with $\lambda_i=\lim_\beta \lambda_{i,\beta}$ (in some sense, \eqref{eq:Bose-Einstein_omega_i=0} can be seen as a singular perturbation of \eqref{eq:limiting_system}). Observe that then $\lambda_i$ is an eigenvalue of $-\Delta$ in $H^1_0(\{\bar u_i\neq 0\})$ and that the sets $\{\bar u_i\neq 0\}$ are disjoint. Therefore it is natural to look for relations between solutions of \eqref{eq:Bose-Einstein_omega_i=0} and solutions of the class of optimal partition problems

\begin{equation}\label{eq:class_of_optimalpartition}
\text{for }k_1,\ldots, k_m\in \N,\qquad \mathop{\inf_{\omega_i\subset \Omega \text{ open}}}_{\omega_i\cap \omega_j=\emptyset\forall i\neq j} \sum_{i=1}^{m} \lambda_{k_i}(\omega_i),
\end{equation}
where $\lambda_{k_i}(\omega)$ denotes the $k_i$--th eigenvalue (counting multiplicities) of $(-\Delta, H^1_0(\omega))$.
The second main result of this paper is the following.
\begin{thm}\label{thm:main2}
Consider \eqref{eq:class_of_optimalpartition} with $k_1=\ldots=k_m=2$, that is
\begin{equation}\label{eq:optimalpartition}
\mathop{\inf_{\omega_i\subset \Omega \text{ open}}}_{\omega_i\cap \omega_j=\emptyset\forall i\neq j} \sum_{i=1}^{m} \lambda_2(\omega_i).
\end{equation}
Then there exist a sequence $u_\beta=(u_{1,\beta},\ldots, u_{m,\beta})$ and a Lipschitz vector function $u=(u_1,\ldots, u_m)$ such that
\begin{itemize}
\item[$(i)$] $u_\beta$ is a sign-changing solution of \eqref{eq:Bose-Einstein_omega_i=0};
\item[$(ii)$] $u_{i,\beta}\to u_i$ in $C^{0,\alpha}(\overline \Omega)\cap H^1_0(\Omega)$ for every $i=1,\ldots, m$, as $\beta\to +\infty$;
\item[$(iii)$] if $\omega_i:=\{u_i\neq 0\}$, then $(\omega_1,\ldots,\omega_m)$ solves \eqref{eq:optimalpartition}.
\end{itemize}
Moreover, we have $\overline \Omega =\cup_{i=1}^m \overline \omega_i$ and the set $\Gamma:=\Omega\cap (\cup_{i=1}^m \partial \omega_i)$ is a regular hypersurface of class $C^{1,\alpha}$, up to a set having at most Hausdorff measure $N-2$.
\end{thm}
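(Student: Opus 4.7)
The plan is to produce $u_\beta$ as sign-changing critical points of the constrained functional
\[
J_\beta(u_1,\ldots,u_m) = \sum_{i=1}^m \int_\Omega |\nabla u_i|^2\,dx + \beta \sum_{i<j}\int_\Omega u_i^2 u_j^2\,dx
\]
on $\mathcal{M}:=\{(u_1,\ldots,u_m)\in H^1_0(\Omega)^m : \|u_i\|_{L^2}=1\ \forall i\}$, at a minimax level calibrated to converge, as $\beta\to+\infty$, to the infimum in \eqref{eq:optimalpartition}. Critical points on $\mathcal{M}$ solve \eqref{eq:Bose-Einstein_omega_i=0} with the $\lambda_{i,\beta}$ as Lagrange multipliers, and testing the $i$-th equation against $u_{i,\beta}$ yields
\[
\sum_i \lambda_{i,\beta}=J_\beta(u_\beta)+\beta\sum_{i<j}\int_\Omega u_{i,\beta}^2 u_{j,\beta}^2\,dx.
\]

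I would then define
\[
c_\beta = \inf_{A\in\mathcal{F}}\sup_{u\in A}J_\beta(u),
\]
where $\mathcal{F}$ is a class of closed symmetric product subsets $A=A_1\times\cdots\times A_m\subset\mathcal{M}$, each $A_i$ of Krasnoselskii genus at least $2$. The sign-changing deformation and linking machinery used for Theorem \ref{thm:main1}, adapted to this product structure, produce a sign-changing critical point $u_\beta$ at level $c_\beta$. For the upper bound: fix any partition $(\omega_1,\ldots,\omega_m)$ of disjoint open subsets of $\Omega$ and let $\varphi_i^1,\varphi_i^2$ be orthonormal first and second $L^2$-eigenfunctions of $-\Delta$ on $H^1_0(\omega_i)$. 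The unit $L^2$-spheres of $\operatorname{span}\{\varphi_i^1,\varphi_i^2\}$ are genus-$2$ admissible choices for $A_i$; disjointness of the $\omega_i$ kills the interaction terms on $A_1\times\cdots\times A_m$, and a direct computation gives $\sup_A J_\beta=\sum_i\lambda_2(\omega_i)$. Taking the infimum over partitions yields $c_\beta\leq \inf\sum_i\lambda_2(\omega_i)$, uniformly in $\beta$.

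This uniform bound forces uniform bounds on $\lambda_{i,\beta}$ and, by Moser iteration, on $\|u_{i,\beta}\|_{L^\infty}$. Invoking the phase-separation results of \cite{nttv1, CL_regularity}, up to a subsequence $u_{i,\beta}\to u_i$ in $C^{0,\alpha}(\overline\Omega)\cap H^1_0(\Omega)$, with $u$ Lipschitz, $u_i u_j\equiv 0$ for $i\neq j$, $\lambda_{i,\beta}\to\lambda_i$, and $-\Delta u_i=\lambda_i u_i$ on $\omega_i:=\{u_i\neq 0\}$. The Lagrange identity combined with $\beta\int u_i^2 u_j^2\to 0$ then yields $\sum_i\lambda_i=\lim_\beta c_\beta$.

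The delicate point, and the main obstacle, is to show that each $u_i$ remains genuinely sign-changing on $\omega_i$ in the limit; once this holds, $\lambda_i$ is an eigenvalue of $-\Delta$ on $H^1_0(\omega_i)$ admitting a sign-changing eigenfunction, so $\lambda_i\geq\lambda_2(\omega_i)$, and combining with the upper bound gives $\sum_i\lambda_i=\inf\sum_i\lambda_2(\omega_i)$, proving $(\omega_i)$ solves \eqref{eq:optimalpartition}. To prevent either $u_{i,\beta}^+$ or $u_{i,\beta}^-$ from collapsing, I would build into $\mathcal{F}$ a quantitative ``sign-changing reserve''---for instance requiring each $A_i$ to $\mathbb{Z}_2$-link a fixed sign-distinguishing hypersurface in the sphere---and combine this with a deformation lemma showing that shrinking of $u_{i,\beta}^\pm$ to zero would lower $c_\beta$, contradicting minimality. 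The geometric conclusions $\overline\Omega=\bigcup_i\overline\omega_i$ and the $C^{1,\alpha}$ regularity of $\Gamma$ up to a singular set of Hausdorff dimension $\leq N-2$ then follow from the free-boundary regularity theory of \cite{nttv1} applied to Lipschitz phase-separated limits satisfying the eigenfunction equation on each $\omega_i$.
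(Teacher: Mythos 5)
Your overall architecture matches the paper's: a genus-two minimax level for $J_\beta$ on the $L^2$-sphere, the upper bound $c_\beta\leq\inf\sum_i\lambda_2(\omega_i)$ obtained by testing with products of unit spheres in $\mathrm{span}\{\varphi_i^1,\varphi_i^2\}$ for a disjoint partition (this is exactly Lemma \ref{lemma:relaxed_c_infty}), the uniform $L^\infty$ and Lagrange-multiplier bounds, and the segregation limit from \cite{nttv1,tavaresterracini}. However, at the step you yourself single out as ``the delicate point'' --- showing that the limit configuration satisfies $\lambda_i\geq\lambda_2(\{\bar u_i\neq 0\})$ --- your proposal has a genuine gap. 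Your plan is to force a quantitative ``sign-changing reserve'' into the minimax class so that neither $u_{i,\beta}^+$ nor $u_{i,\beta}^-$ collapses as $\beta\to+\infty$. The difficulty is that the cone-avoidance parameter $\delta$ in the construction of sign-changing critical points (the condition $\dist_2(u_\beta,\Peh)\geq\delta$, i.e.\ $\|u_{i,\beta}^{\pm}\|_{L^2}\geq\delta$) is produced by a compactness argument that depends on the nonlinearity, hence on $\beta$; nothing in your sketch shows it can be chosen uniformly in $\beta$, and the claim that ``shrinking of $u_{i,\beta}^{\pm}$ to zero would lower $c_\beta$'' is asserted rather than proved. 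The paper circumvents this entirely with a different argument (Lemma \ref{lemma:second_eigenvalue}): since $u_{i,\beta}$ is a sign-changing eigenfunction of the Schr\"odinger operator $-\Delta+\beta\sum_{j\neq i}u_{j,\beta}^2$ on the whole of $\Omega$, one has $\lambda_{i,\beta}>\lambda_1(-\Delta+\beta\sum_{j\neq i}u_{j,\beta}^2,\Omega)$ for \emph{every finite} $\beta$; passing to the limit in the associated first eigenfunctions $\varphi_{i,\beta}\geq 0$ (which concentrate on $\{\bar u_i\neq 0\}$ because the potential blows up elsewhere, and remain $L^2$-orthogonal to $u_{i,\beta}$) shows that $\bar u_i$ is orthogonal to a nontrivial nonnegative function on its support, hence changes sign there, giving $\lambda_i\geq\lambda_2(\{\bar u_i\neq 0\})$. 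This is the key idea missing from your proposal.

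Two secondary issues. First, you define the minimax class $\mathcal F$ as consisting of \emph{product} sets $A_1\times\cdots\times A_m$; such a class is not invariant under the gradient flow (the image of a product need not be a product), so the deformation argument breaks. The paper's vector genus $\Gamma^{(k_1,\ldots,k_m)}$, based on a Borsuk--Ulam theorem on products of spheres, is designed precisely to contain the products needed for the upper bound while being stable under equivariant deformations. Second, for $N\geq 5$ the interaction term $\beta\int u_i^2u_j^2$ can be $+\infty$ on the constraint manifold, so $c_\beta$ cannot be defined directly with $J_\beta$; the paper works with truncated nonlinearities and removes the truncation via the a priori $L^\infty$ bound of Lemma \ref{lemma:brezis_kato}.
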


Adapting the proof of the previous theorem, we will actually see that a similar result holds for \eqref{eq:class_of_optimalpartition} in the more general case where one takes $k_1,\ldots, k_m\in \{1,2\}$ (with the difference that the approximating solutions $u_\beta$ will only change sign in the components $i$ such that $k_i=2$, and all the other components will be positive). We should mention that for $k_1=\ldots=k_m=1$ a result similar to Theorem \ref{thm:main2} was already know if one combined the papers \cite{CL_regularity, CL_eigenvalues}. Some preliminary results were also proved by Conti, Terracini and Verzini \cite{ctv4}, while Helffer, Hoffmann-Ostenhof and Terracini \cite{HHOT} have proved that, in dimension two, \emph{every} solution $(\omega_1,\ldots,\omega_m)$ of \eqref{eq:class_of_optimalpartition} is regular in the sense of the last paragraph of Theorem \ref{thm:main2}. Passing from the case of a sum of first eigenvalues to the sum of second eigenvalues is not trivial, because while in the first case one can work with minima of the the energy functional associated with \eqref{eq:Bose-Einstein_omega_i=0}, in the latter case one has to define an appropriate minimax quantity. We would like to mention that in the case $k_1=\ldots=k_m=k$, the existence of solution of \eqref{eq:class_of_optimalpartition} was proved in the class of quasi-open sets by Bucur, Buttazzo and Henrot \cite{BBH}, and more recently in the class of open sets by Burdin, Bucur and Oudet \cite{BBO}.

The structure of this paper is as follows. In Section \ref{sec:second_section} we prove the existence of infinitely many sign-changing solutions for a general competitive system. The main tool will be the use of a new notion of Krasnoselskii genus, which will take in consideration the fact that the functionals considered are even in each single component. This genus will be rather effective in connecting problem \eqref{eq:Bose-Einstein_omega_i=0} with \eqref{eq:class_of_optimalpartition} (as will become evident in Lemma \ref{lemma:relaxed_c_infty}). Section \ref{sec:third_section} is then dedicated to the proof of Theorem \ref{thm:main1}, applying the results of Section \ref{sec:second_section} to system \eqref{eq:Bose-Einstein}. Observe that one difficulty to overcome is the fact that the energy functional
$$
u=(u_1,\ldots,u_m) \mapsto \sum_{i=1}^m \int_\Omega  (|\nabla u_i|^2 + \frac{a_i u_i^4}{2})\, dx + \mathop{\sum_{i,j=1}^m}_{i\neq j} \frac{\beta}{2}\int_\Omega  u_i^2 u_j^2 \, dx
$$
for $\|u_i\|_{L^2(\Omega)}=1$, might take the value $+\infty$. We overcome this fact by using a truncation argument. Finally in the last section we will present the proof of Theorem \ref{thm:main2}.

\section{Sign-changing solutions for general competitive systems}\label{sec:second_section}

Take two odd functions $f,g:\R\to \R$, of class $C^1$, such that
\begin{itemize}
\item[($fg1$)] $f'(t),g'(t)\geq 0$ for every $t>0$;
\item[($fg2$)] There exist $C>0$ and $1<p< \min\{2^\ast/2,3\}$, $1<q<\min\{2^*,3\}$ such that
$$
|f(t)|\leq C(1+|t|^{p-1}),\quad |g(t)|\leq C(1+|t|^{q-1}) \qquad \text{ for every $t \in \R$. }
$$
\item[($fg3$)] For every $s,t\geq 0$,
$$
f(s)t+f(t)s\leq f(s)s+f(t)t \qquad \text{and}\qquad g(s)t+g(t)s\leq g(s)s+g(t)t.
$$
\end{itemize}
Let  $G(s):=\int_0^s g(\xi)\, d\xi$, $F(s):=\int_0^s f(\xi)\, d\xi.$

In this section we will focus on the proof of the following result.

\begin{thm}\label{thm:infinitely_many_general}
There exist infinitely many sign-solutions of the system
\begin{equation}\label{eq: BEC_more_general}
-\Delta u_i+g(u_i)+f(u_i)\sum_{j\neq i} F(u_j)=\lambda_i u_i, \ \ u_i\in H^1_0(\Omega), \quad i=1,\ldots,m.
\end{equation}
\end{thm}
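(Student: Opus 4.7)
I would set up the variational problem by considering the functional
\[
J(u) = \sum_{i=1}^m \int_\O \Bigl(\tfrac{1}{2}|\nabla u_i|^2 + G(u_i)\Bigr)\, dx + \frac{1}{2}\mathop{\sum_{i,j=1}^m}_{i\neq j}\int_\O F(u_i)F(u_j)\, dx
\]
on the $L^2$-constraint manifold $\Mah := \{u\in H^1_0(\O;\R^m) : \|u_i\|_{L^2(\O)}=1,\ i=1,\ldots,m\}$, the numbers $\lambda_i$ in \eqref{eq: BEC_more_general} playing the role of Lagrange multipliers. Hypothesis $(fg1)$ gives $F,G\geq 0$, so $J$ is bounded below and coercive on $\Mah$, while the subcritical bounds in $(fg2)$ make $J$ of class $C^1$ and yield the Palais--Smale condition through standard Sobolev embeddings and the Br\'ezis--Lieb trick. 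The key structural point is that, $f$ and $g$ being odd, both $F$ and $G$ are even, and therefore $J$ is invariant under the product group $\Z_2^m$ acting componentwise on $H^1_0(\O;\R^m)$ by $u_i\mapsto -u_i$, independently in each coordinate.

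I would then exploit this richer-than-usual symmetry via a new, vector-valued Krasnoselskii-type index: for a $\Z_2^m$-invariant closed subset $A\subset\Mah$ and each $i\in\{1,\ldots,m\}$, let $\gamma_i(A)$ be the classical Krasnoselskii genus of $A$ computed \emph{with respect only to the $i$-th $\Z_2$-factor} (the remaining coordinates being frozen), and set $\vec\gamma(A):=(\gamma_1(A),\ldots,\gamma_m(A))\in\N^m$. This multi-index satisfies the natural monotonicity, subadditivity and invariance under $\Z_2^m$-equivariant deformations. For any multi-index $(n_1,\ldots,n_m)$ with $n_i\geq 2$ one defines the minimax value
\[
c_{n_1,\ldots,n_m} := \inf_{\substack{A\subset\Mah\text{ closed, $\Z_2^m$-invariant},\\ \vec\gamma(A)\geq(n_1,\ldots,n_m)}}\ \sup_{u\in A}J(u),
\]
and a standard $\Z_2^m$-equivariant deformation lemma applied to $J\rst{\Mah}$ produces a critical point at each level. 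Since $\gamma_i$ is coercive with respect to the Sobolev norm, $c_{n_1,\ldots,n_m}\to+\infty$ as $n_1+\cdots+n_m\to\infty$, giving infinitely many distinct critical values and hence infinitely many distinct solutions of \eqref{eq: BEC_more_general}.

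The main obstacle is to show that the critical points so obtained are genuinely \emph{sign-changing}, i.e.\ $u_i^+\not\equiv 0$ and $u_i^-\not\equiv 0$ for every $i$. The guiding principle is that the condition $\gamma_i(A)\geq 2$, via a Borsuk--Ulam-type argument applied to the continuous functional $u\mapsto\|u_i^+\|_{L^2}^2-\|u_i^-\|_{L^2}^2$ (which is odd under the $i$-th $\Z_2$-factor), should force every admissible $A$ to contain elements whose $i$-th component has balanced positive and negative parts. To convert this heuristic into a rigorous exclusion of sign-preserving critical points, I would exploit $(fg3)$ together with the pointwise identity $F(u_i)=F(u_i^+)+F(u_i^-)$ (which holds because $F$ is even and $u_i^\pm$ have disjoint supports) to establish a Bartsch--Weth-type inequality $J(u)\geq J(u^+)+J(u^-)$, and then construct an equivariant pseudo-gradient flow that acts separately on positive and negative parts, strictly decreasing the level of any candidate with $u_i^\pm\equiv 0$ below $c_{n_1,\ldots,n_m}$ — contradicting the minimax definition. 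It is precisely the product structure of $\Z_2^m$ (and of $\vec\gamma$) that is indispensable here: a single $\Z_2$-genus would at best yield sign change in one component and would not exclude purely positive critical points from appearing at the minimax levels.
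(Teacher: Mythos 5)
Your overall architecture (constrained minimax on the $L^2$-sphere, a product-$\Z_2^m$ index, a device to force every component to change sign, levels tending to $+\infty$) matches the paper's, but two steps as you describe them contain genuine gaps. First, the index. Defining $\gamma_i(A)$ as the classical genus of $A$ ``with respect only to the $i$-th $\Z_2$-factor'' and setting $\vec\gamma(A)=(\gamma_1(A),\ldots,\gamma_m(A))$ gives a strictly weaker object than what the argument needs. The componentwise conditions $\gamma_i(A)\geq n_i$ only produce, for each $i$ separately, some point of $A$ where a given $\sigma_i$-odd map vanishes; they do not produce a \emph{single} point $\bar u\in A$ at which $m$ maps $f_1,\ldots,f_m$ (each $f_i$ odd under $\sigma_i$ and even under $\sigma_j$, $j\neq i$) vanish \emph{simultaneously}. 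That simultaneous zero is exactly what is needed at two places: to show that every admissible $A$ contains a point all of whose components are sign-balanced at once (so that $A\setminus\Peh_\delta\neq\emptyset$, where $\Peh$ is the union of all the cones $\pm\Peh_i$), and to prove the lower bound $d^{k_1,\ldots,k_m}\geq\sum_i\lambda_{k_i-1}(\Omega)$ by testing against eigenfunctions in all components at the same point. The paper therefore defines $\vec\gamma(A)\geq(k_1,\ldots,k_m)$ directly as this joint zero property, and its nontriviality on products of spheres rests on a Borsuk--Ulam theorem on product spaces (Dzedzej--Idzik--Izydorek), not on the classical one-involution genus. Your plan never supplies this joint statement.

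Second, the sign-changing mechanism. A ``standard $\Z_2^m$-equivariant deformation lemma'' is not available here, because the constrained gradient flow of $J$ on $\Mah$ need not leave the cone neighborhoods $\Peh_\delta$ positively invariant (the paper remarks explicitly that the signs of the terms $G(u_i)$ and $\sum_{j\neq i}F(u_i)F(u_j)$ obstruct this). The paper's substitute is to build, following Conti--Merizzi--Terracini, a pseudogradient $V=Id-K$ where $K(u)$ solves the auxiliary system $-\Delta w_i+g(w_i)+f(w_i)\sum_{j\neq i}F(u_j)=\mu_iu_i$, $\int_\Omega u_iw_i=1$; hypothesis $(fg3)$ is used precisely to prove $\langle\nabla J(u),V(u)\rangle\geq 2\|V(u)\|^2$, and a strong maximum principle argument shows $K(\Peh_\delta)\subseteq\Peh_{\delta/2}$, whence $\Peh_\delta$ is invariant and the minimax over $A\setminus\Peh_\delta$ is attained outside $\Peh_\delta$, i.e.\ at a sign-changing $u$. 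Your alternative --- a Bartsch--Weth splitting $J(u)\geq J(u^+)+J(u^-)$ plus a flow ``acting separately on positive and negative parts'' --- is not developed and is problematic on the constraint manifold: $u^\pm$ do not lie on $\Mah$, the coupling $F(u_i)F(u_j)$ mixes different components rather than the positive/negative parts of one component, and no invariant region or intersection lemma is identified that would let the deformation argument exclude sign-definite components at the minimax level. As written, the exclusion of non-sign-changing critical points remains a heuristic rather than a proof.
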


As we shall see in the next section, Theorem \ref{thm:main1} will be a consequence of this theorem.
\begin{rem}\label{rem:genus_usual_definition} From the previous list of hypotheses, we can conclude that $F(t), G(t)$ are even nonnegative functions, $f(t)t, f'(t), g(t)t, g'(t)\geq 0$ for every $t\in \R$, $f(0)=g(0)=0$, and moreover
\begin{equation}\label{eq:f(s)t+f(t)s}
f(s)t+f(t)s\leq f(s)s+f(t)t \text{ and } g(s)t+g(t)s\leq g(s)s+g(t)t\quad \text{ for every } s,t\in \R.
\end{equation}
\end{rem}

We will look for solutions of \eqref{eq: BEC_more_general} as critical points of the functional
$$
J(u)=\sum_{i=1}^m \int_\Omega ( |\nabla u_i|^2+2G(u_i))\, dx+\mathop{\sum_{i,j=1}^m}_{j\neq i} \int_\Omega F(u_i)F(u_j)\, dx.
$$
restricted to the $L^2$--sphere
$$
\Mah=\{u=(u_1,\ldots,u_m)\in H^1_0(\Omega;\R^m):\ \|u_i\|_{L^2(\Omega)}=1 \ \forall i\}.
$$
In order to obtain infinitely many critical points, we will define several minimax levels using a new definition of \emph{vector genus}.


\subsection{Vector genus. Minimax levels}

Take the involutions
$$
   \sigma_{i}:\Mah\to \Mah,\qquad \sigma_i(u_1,\ldots, u_m)= (u_1,\ldots, -u_i,\ldots, u_m)\quad \forall i.
$$
Consider moreover the class of sets
$$
\Feh=\{A\subseteq \Mah: \ A \text{ is a closed set and } \sigma_i(u)\in A\ \forall u\in A,\  i=1,\ldots,m  \}
$$
and, for each $A\in \Feh$ and $k_1,\ldots, k_m\in \N$, the class of functions
$$
F_{(k_1,\ldots,k_m)}(A)=\left\{ f=(f_1,\ldots,f_m):A\to \prod_{i=1}^m \R^{k_i-1}:
\begin{array}{l}
f_i:A\to \R^{k_i-1} \text{ continuous, and }\\
f_i(\sigma_i(u))=-f_i(u) \text{ for every }i\\
f_i(\sigma_j(u))=f_i(u) \text{ whenever }j\neq i
\end{array}
\right\}.
$$
\begin{defin}[vector genus]
Let $A\in \Feh$ and take $m$ positive integers $k_1,\ldots, k_m$. We say that $\vec{\gamma}(A)\geq (k_1,\ldots,k_m)$ if for every $f\in F_{(k_1,\ldots,k_m)}(A)$ there exists $\bar u\in A$ such that $f(\bar u)=(f_1(\bar u),\ldots, f_m(\bar u))=(0,\ldots,0).$ We denote
$$
\Gamma^{(k_1,\ldots,k_m)}:=\{A\in \Feh:\ \vec{\gamma}(A)\geq (k_1,\ldots,k_m)\}.
$$
\end{defin}
\begin{rem}
Observe that we don't actually define the quantity $\vec{\gamma}(C)$, but only give a meaning to the expression ``$\vec{\gamma}(C)\geq (k_1,\ldots,k_m)$''.
\end{rem}

\begin{rem}
Recall the usual definition of Krasnoselskii genus associated with the $\Z_2$ symmetry group: for every nonempty and closed set $A\subset H^1_0(\Omega)$ such that $-A=A$,
$$
\gamma(A):=\inf\{k:\ \text{ there exists } h:A\to \R^k\setminus\{0\} \text{ continuous and odd}\}
$$
and $\gamma(A):=\infty$ if no such $k$ exists. Then for $m=1$ the notion of vector genus coincides with the usual one, in the sense that, for $k\in \N$,
$$
\vec \gamma(A)\geq k \iff \gamma(A)\geq k.
$$
\end{rem}

The key properties of this notion of genus will come out from the following Borsuk-Ulam type result due to Dzedzej, Idzik and Izydorek (see \cite{Borsuk_Ulam_I,Borsuk_Ulam_II}). A weaker version for the case of the product of two spheres had already been proved by Zhong \cite{Borsuk_Ulam_twospheres}.

\begin{thm}\label{thm:borsuk_ulam}
If $\tilde f: \prod_{i=1}^m S^{n_i}\to \prod_{i=1}^m \R^{n_i}$ is a continuous function such that, for every $i\in \{1,\ldots,m$\},
\begin{eqnarray*}
\tilde f_i(x_1,\ldots, -x_i,\ldots,x_m)=-\tilde f_i(x_1,\ldots,x_i,\ldots, x_m),\\
\tilde f_i(x_1,\ldots, -x_j,\ldots,x_m)=\tilde f_i(x_1,\ldots,x_j,\ldots, x_m)\ \forall j\neq i,
\end{eqnarray*}
then there exists $(\bar x_1,\ldots, \bar x_m)\in \prod_{i=1}^m S^{n_i}$ such that $\tilde f(\bar x_1,\ldots, \bar x_m)=(0,\ldots,0).$
\end{thm}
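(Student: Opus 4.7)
The plan is to recognise $\tilde f$ as a section, on the quotient of $\prod_{i=1}^m S^{n_i}$ by $(\Z_2)^m$, of a natural real vector bundle whose top Stiefel--Whitney class is nonzero; such a section must vanish, and any zero lifts back to a zero of $\tilde f$.

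Let $G=(\Z_2)^m$ act on $\prod_{i=1}^m S^{n_i}$ via the involutions $\sigma_i$ (antipodal on the $i$-th factor, identity on the others), with quotient $X:=\prod_{i=1}^m \R P^{n_i}$. The two equivariance hypotheses say precisely that $\tilde f_i$ is equivariant for the representation $V_i$ of $G$ on $\R^{n_i}$ in which $\sigma_i$ acts by $-I$ and $\sigma_j$ acts trivially for $j\neq i$. Consequently $\tilde f=(\tilde f_1,\ldots,\tilde f_m)$ descends to a continuous section $s$ of the real vector bundle
$$
E \;=\; \bigoplus_{i=1}^m \pi_i^*\!\left(\gamma_{n_i}^{\oplus n_i}\right) \;\longrightarrow\; X,
$$
where $\pi_i:X\to \R P^{n_i}$ is the $i$-th projection and $\gamma_{n_i}\to \R P^{n_i}$ is the tautological real line bundle. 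Any zero of $s$ pulls back to a $G$-orbit of zeros of $\tilde f$, which is what we want to produce.

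Next I would compute the top Stiefel--Whitney class of $E$. Write $\alpha_i:=\pi_i^*w_1(\gamma_{n_i})\in H^1(X;\Z_2)$. The Whitney sum formula gives $w_{n_i}\!\left(\pi_i^*\gamma_{n_i}^{\oplus n_i}\right)=\alpha_i^{n_i}$, and since the factors involve cohomologically independent classes,
$$
w_{n_1+\cdots+n_m}(E) \;=\; \prod_{i=1}^m \alpha_i^{n_i}.
$$
The mod-2 K\"unneth theorem identifies $H^*(X;\Z_2)$ with the truncated polynomial algebra $\Z_2[\alpha_1,\ldots,\alpha_m]/(\alpha_1^{n_1+1},\ldots,\alpha_m^{n_m+1})$, in which $\prod_i \alpha_i^{n_i}$ is the generator of the top-degree cohomology $H^{\sum n_i}(X;\Z_2)\cong\Z_2$; in particular it is nonzero. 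Since $\mathrm{rank}\,E=\dim X=\sum n_i$, this top class is the obstruction to a nowhere-vanishing section of $E$: any such section would split off a trivial line sub-bundle and force $w_{\mathrm{top}}(E)=w_{\mathrm{top}}(E/L)=0$, a contradiction. Therefore $s$ vanishes at some point of $X$, and lifting produces $(\bar x_1,\ldots,\bar x_m)\in\prod_i S^{n_i}$ with $\tilde f(\bar x_1,\ldots,\bar x_m)=0$.

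The main obstacle is the descent step: one has to verify carefully that the pointwise equivariance of $\tilde f$ really realises $\tilde f$ as a section of this specific bundle $E$ (rather than some twisted variant), which amounts to correctly identifying the representation $V_i$ of $G$ attached to each component and invoking the associated-bundle construction $\prod_j S^{n_j}\times_G V_i \cong \pi_i^*\gamma_{n_i}^{\oplus n_i}$. Once $E$ is set up correctly, the characteristic-class computation and the final topological conclusion are entirely standard. An alternative, more elementary but more laborious route would be induction on $m$, using the classical Borsuk--Ulam theorem in the base case and a parametrised version (together with an equivariant transversality argument on the zero set of $\tilde f_m$) in the inductive step.
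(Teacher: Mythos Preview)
Your argument is correct. The descent of $\tilde f$ to a section of $E=\bigoplus_i \pi_i^*\gamma_{n_i}^{\oplus n_i}$ over $X=\prod_i \R P^{n_i}$ is exactly the associated-bundle construction for the $G$-representation you describe, and the computation $w_{\sum n_i}(E)=\prod_i \alpha_i^{n_i}\neq 0$ in $H^*(X;\Z_2)\cong \Z_2[\alpha_1,\ldots,\alpha_m]/(\alpha_i^{n_i+1})$ is routine. The obstruction step is also fine: a nowhere-vanishing section would split off a trivial line summand, forcing the degree-$\sum n_i$ Stiefel--Whitney class to come from a bundle of rank $\sum n_i-1$, hence vanish. (Your phrase ``$w_{\mathrm{top}}(E)=w_{\mathrm{top}}(E/L)$'' is slightly ambiguous---the point is that $w_{\sum n_i}(E)=w_{\sum n_i}(E/L)=0$ because $E/L$ has rank $\sum n_i-1$.)

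As for the comparison: the paper does not prove this theorem at all. It is quoted as a known Borsuk--Ulam type result due to Dzedzej, Idzik and Izydorek (with a weaker $m=2$ version attributed to Zhong), and used as a black box in Lemma~\ref{lem:properties_of_vector_genus}. So there is no ``paper's own proof'' to compare against; you have supplied a self-contained characteristic-class argument where the authors simply cite the literature. Your proof is a clean and standard way to establish such product Borsuk--Ulam statements, and has the virtue of making transparent why the specific equivariance pattern (odd in the $i$-th slot, even in the others) is exactly what is needed: it is precisely the condition that singles out the bundle $\bigoplus_i \pi_i^*\gamma_{n_i}^{\oplus n_i}$, whose top class generates $H^{\dim X}(X;\Z_2)$.
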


\begin{lemma}\label{lem:properties_of_vector_genus}
With the previous notations, the following properties hold.
\begin{enumerate}
\item[$(i)$] Take $\prod_{i=1}^m A_i\subseteq \Mah$ and let $\eta_i:S^{k_i-1}\subset \R^{k_i} \to A_i$ be a homeomorphism such that $\eta_i(-x)=-\eta_i(x)$ for every $x\in S^{k_i-1}$, $i\in \{1,\ldots,m\}$.
Then $$\prod_{i=1}^m A_i \in \Gamma^{(k_1,\ldots,k_m)}.$$

\item[$(ii)$] We have $\overline {\eta(A)} \in \Gamma^{(k_1,\ldots,k_m)}$ whenever $A\in \Gamma^{(k_1,\ldots,k_m)}$ and $\eta:A\to \Mah$ is such that $\eta\circ \sigma_i=\sigma_i\circ \eta\ \forall i$.
\end{enumerate}
\end{lemma}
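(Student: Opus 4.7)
My plan is to treat both parts as essentially direct applications of the Borsuk--Ulam type Theorem \ref{thm:borsuk_ulam}, with the only real work being bookkeeping of the equivariances. The overall scheme is: in each case, given an $f\in F_{(k_1,\ldots,k_m)}(\cdot)$, transport $f$ (by pre-composition with a suitable equivariant map) to a test function defined on the product of spheres, apply Theorem \ref{thm:borsuk_ulam} to obtain a common zero, and finally push that zero back to the set in question.

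For item $(i)$, given $f=(f_1,\ldots,f_m)\in F_{(k_1,\ldots,k_m)}(\prod_{i=1}^m A_i)$, I would define
$$
\tilde f(x_1,\ldots,x_m):=f\bigl(\eta_1(x_1),\ldots,\eta_m(x_m)\bigr),\qquad (x_1,\ldots,x_m)\in\prod_{i=1}^m S^{k_i-1}.
$$
The oddness of each $\eta_i$, combined with the defining properties $f_i\circ\sigma_i=-f_i$ and $f_i\circ\sigma_j=f_i$ for $j\neq i$, immediately yields the equivariance hypotheses of Theorem \ref{thm:borsuk_ulam} for $\tilde f$. A common zero $(\bar x_1,\ldots,\bar x_m)$ then produces $\bar u=(\eta_1(\bar x_1),\ldots,\eta_m(\bar x_m))\in\prod_i A_i$ with $f(\bar u)=0$, which is exactly what $\vec\gamma(\prod_i A_i)\geq(k_1,\ldots,k_m)$ asks for. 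I would also need to observe in passing that $\prod_i A_i\in\Feh$: it is closed as a product of closed sets, and the oddness of each $\eta_i$ gives $\sigma_i$-invariance componentwise.

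For item $(ii)$, I would first verify that $\overline{\eta(A)}\in\Feh$: closedness is automatic, while the commutation relation $\eta\circ\sigma_i=\sigma_i\circ\eta$ together with $A\in\Feh$ gives $\sigma_i(\eta(A))=\eta(\sigma_i(A))=\eta(A)$, and then continuity of the involution $\sigma_i$ propagates invariance to the closure. Given $f\in F_{(k_1,\ldots,k_m)}(\overline{\eta(A)})$, set $g:=f\circ\eta$ on $A$; the same commutation relation transports the symmetry properties of $f$ to $g$, so that $g\in F_{(k_1,\ldots,k_m)}(A)$. Applying the hypothesis $\vec\gamma(A)\geq(k_1,\ldots,k_m)$ produces $\bar u\in A$ with $g(\bar u)=0$, and then $\eta(\bar u)\in\overline{\eta(A)}$ is the desired zero of $f$.

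The genuinely new ingredient is Theorem \ref{thm:borsuk_ulam} itself; beyond invoking it, the main (modest) obstacle is tracking the two different kinds of equivariance (oddness in the $i$-th component, evenness in the others) simultaneously across the reductions, and noting that continuity of $\eta$ is implicitly needed in $(ii)$ both to build $g$ and to transfer $\sigma_i$-invariance to the closure. I do not anticipate any further technical difficulty.
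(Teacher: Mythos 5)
Your proposal is correct and follows essentially the same route as the paper: part (i) is the same transport of $f$ to the product of spheres via the odd homeomorphisms $\eta_i$ followed by Theorem \ref{thm:borsuk_ulam}, and part (ii) is the same pre-composition $f\circ\eta$ argument, including the preliminary check that $\overline{\eta(A)}\in\Feh$. No gaps.
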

\begin{proof}i) Take $f\in F_{(k_1,\ldots,k_m)}(\prod_{i=1}^m A_i)$ and consider the map
$$
\vphi:\prod_{i=1}^m S^{k_i-1} \to \prod_{i=1}^m\R^{k_i-1}; \quad \vphi(x_1,\ldots, x_m):=f(\eta_1(x_1),\ldots, \eta_m(x_m)).
$$
For each fixed $i\in \{1,\ldots, m\}$, we have that
\begin{eqnarray*}
\vphi_i(x_1,\ldots,-x_i,\ldots, x_m)&=&  f_i(\eta_1(x_1),\ldots, \eta_i(-x_i),\ldots, \eta_m(x_m))\\
						    &=&  f_i(\eta_1(x_1),\ldots, -\eta_i(x_i),\ldots, \eta_m(x_m))\\
						    &=& -f_i(\eta_1(x_1),\ldots, \eta_i(x_i),\ldots, \eta_m(x_m))\\
						    &=& -\vphi_i(x_1,\ldots, x_i,\ldots, x_m)
\end{eqnarray*}
and, for $j\neq i$,
\begin{eqnarray*}
\vphi_i(x_1,\ldots,-x_j,\ldots, x_m)&=&  f_i(\eta_1(x_1),\ldots, \eta_j(-x_j),\ldots, \eta_m(x_m))\\
						    &=&  f_i(\eta_1(x_1),\ldots, -\eta_j(x_j),\ldots, \eta_m(x_m))\\
						    &=& f_i(\eta_1(x_1),\ldots, \eta_j(x_j),\ldots, \eta_m(x_m))\\
						    &=& \vphi_i(x_1,\ldots, x_j,\ldots, x_m).
\end{eqnarray*}
Hence Theorem \ref{thm:borsuk_ulam} implies that $\vphi^{-1}(\{(0,\ldots,0)\})\neq \emptyset$, and hence also $f^{-1}(\{(0,\ldots,0)\})\neq \emptyset$.

ii) First of all, it is easy to prove that if $A\in \Feh$ and $\eta$ is as in the statement, then the set $\overline{\eta(A)}\in \Feh$. Take any $f\in F_{(k_1,\ldots,k_m)}(\overline{\eta(A)}).$ Then the map
$$
f\circ\eta:A\to \prod_{i=1}^m \R^{k_i-1},\quad u\mapsto (f_1(\eta(u)),\ldots,f_m(\eta(u)))
$$
is continuous and, for every $i$,
\begin{equation*}
f_i(\eta(\sigma_i(u))= f_i(\sigma_i(\eta(u)))=-f_i(\eta(u)),
\end{equation*}
and for every $i\neq j$
$$
f_i(\eta(\sigma_j(u)))=f_i(\sigma_j(\eta(u)))=f_i(\eta(u)).
$$

Hence $f\circ \eta \in F_{(k_1,\ldots,k_m)}(A)$ and from the definition of genus we deduce the existence of $\bar u\in A$ such that $f(\eta(\bar u))=(0,\ldots,0)$, and the proof is complete.
\end{proof}

Together with this notion of genus, in order to obtain solutions which change sign, we will use a strategy based on the work of Conti, Merizzi, Terracini \cite{contimerizziterracini}, using cones of positive/negative functions. A similar approach was also used for instance in \cite{wethetal, ramostavzou}.
In our case, for each $i\in \{1,\ldots,m\}$, we define the cone
$$\Peh_i=\{u=(u_1,\ldots,u_m)\in H^1_0(\Omega; \R^m):\ u_i\geq 0\}$$
and take $\Peh:=\cup_{i=1}^m (\Peh_i\cup -\Peh_i)$. Moreover, for each $\delta>0$, we define $\Peh_\delta=\{u\in H^1_0(\Omega; \R^m): \dist_2(u,\Peh)<\delta\}$, where $\dist_2$ denotes the distance associated with the $L^2$--norm. Observe that $\dist_2(u,\Peh_i)=\|u_i^-\|_{L^2(\Omega)}$ and $\dist_2(u,-\Peh_i)=\|u_i^+\|_{L^2(\Omega)}$.

\begin{lemma}\label{lem:A_setminus_Cone_is_nonempty}
For every $\delta<\sqrt{2}/2$ we have that $A\setminus \Peh_\delta \neq \emptyset$ whenever $A\in \Gamma^{(k_1,\ldots,k_m)}$ with $k_i\geq 2\ \forall i$.
\end{lemma}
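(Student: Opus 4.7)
The plan is to exhibit, for any $A \in \Gamma^{(k_1,\ldots,k_m)}$, an explicit map $f \in F_{(k_1,\ldots,k_m)}(A)$ whose zero set in $A$ is forced to avoid $\Peh_\delta$. Since the vector genus then guarantees a zero of $f$ in $A$, this will automatically produce a point of $A \setminus \Peh_\delta$. The shape of $f$ is dictated by the symmetry it must obey: it should measure, in each component, the asymmetry between the positive and negative parts in the $L^2$-norm.

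Concretely, for each $i$ I would set
$$
f_i(u) := \bigl( \|u_i^+\|_{L^2(\Omega)}^2 - \|u_i^-\|_{L^2(\Omega)}^2,\ 0, \ldots, 0 \bigr) \in \R^{k_i-1},
$$
using $k_i - 1 \geq 1$ to pad with zeros. Continuity on $A \subset H^1_0(\Omega;\R^m)$ is immediate from $H^1_0 \hookrightarrow L^2$ and the Lipschitz continuity of $u\mapsto u^\pm$ in $L^2$. The equivariance is also transparent: the involution $\sigma_i$ interchanges $u_i^+$ with $u_i^-$ (so reverses the sign of $f_i$), while $\sigma_j$ for $j\neq i$ does not touch the $i$-th component (so leaves $f_i$ unchanged). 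Hence $f \in F_{(k_1,\ldots,k_m)}(A)$.

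The defining property of $\Gamma^{(k_1,\ldots,k_m)}$ then yields $\bar u \in A$ with $f(\bar u) = 0$, i.e.\ $\|\bar u_i^+\|_{L^2} = \|\bar u_i^-\|_{L^2}$ for every $i$. Combined with the $\Meh$-constraint $\|\bar u_i\|_{L^2} = 1$, this forces
$$
\|\bar u_i^+\|_{L^2(\Omega)} = \|\bar u_i^-\|_{L^2(\Omega)} = \frac{\sqrt 2}{2} \qquad \text{for every } i.
$$
Invoking the identities $\distd(u,\Peh_i) = \|u_i^-\|_{L^2}$ and $\distd(u,-\Peh_i) = \|u_i^+\|_{L^2}$ already recorded in the paper, I then get
$$
\distd(\bar u, \Peh) = \min_{i=1,\ldots,m}\min\bigl( \|\bar u_i^+\|_{L^2},\ \|\bar u_i^-\|_{L^2}\bigr) = \frac{\sqrt 2}{2} > \delta,
$$
so $\bar u \in A \setminus \Peh_\delta$, proving nonemptiness.

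I do not anticipate a serious obstacle in this argument: the only delicate point is making the test map equivariant in the very specific manner demanded by $F_{(k_1,\ldots,k_m)}$ (odd in the $i$-th variable, even in every other), and arranging that its vanishing locus lies on the ``equator'' of $\Meh$, which is maximally distant from $\Peh$. The hypothesis $k_i \geq 2$ for every $i$ is exactly what makes $\R^{k_i-1}$ nontrivial, so that the asymmetry functional can be placed into the first coordinate; if some $k_i$ were $1$, the corresponding $f_i$ would be forced to be trivial and the argument would break down.
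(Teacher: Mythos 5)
Your proof is correct and is essentially identical to the paper's: your test map $f_i(u)=\bigl(\|u_i^+\|_{L^2}^2-\|u_i^-\|_{L^2}^2,0,\ldots,0\bigr)$ is literally the paper's choice $f_i(u)=\bigl(\int_\Omega u_i|u_i|\,dx,0,\ldots,0\bigr)$ written in a different form, and the rest of the argument (equivariance, the forced equality $\|\bar u_i^+\|_{L^2}=\|\bar u_i^-\|_{L^2}=\sqrt 2/2$, and the distance computation) matches the paper's step for step.
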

\begin{proof}
Given $A\in \Gamma^{(k_1,\ldots,k_m)}$, consider the map
$$
f=(f_1,\ldots,f_m):A\to \prod_{i=1}^m\R^{k_i-1};\qquad f_i(u)= \Big( \int_\Omega u_i|u_i|\, dx,0,\ldots,0\Bigr).
$$
Clearly $f\in F_{(k_1,\ldots,k_m)}(A)$, hence there exists $\bar u\in A$ such that $f(\bar u)=(0,\ldots,0)$. By recalling that $A\subseteq \Meh$, we deduce that
$$\int_\Omega (\bar u_i^+)^2\, dx =\int_\Omega (\bar u_i^-)^2 \, dx=\frac{1}{2} \qquad \text{ for all $i$}.$$
Thus $\dist_2(\bar u,\Peh)=\sqrt{2}/2$, and $\bar u\in A\setminus \Peh_\delta$ for every $\delta<\sqrt{2}/2$.
\end{proof}

We are now ready to define a sequence of minimax levels which will turn out to be critical levels for $J|_\Mah$. For every $k_1,\ldots, k_m\geq 2$ and $\delta<\sqrt{2}/2$, define
\begin{equation}\label{eq:general_minmax_level}
d_\delta^{k_1,\ldots,k_m}=\inf_{A\in \Gamma^{(k_1,\ldots,k_m)}}\sup_{A\setminus \Peh_\delta} J.
\end{equation}

\begin{rem}
It will be important to have an upper-bound for these minimax levels which is independent of $\delta$. Considering
\begin{equation*}
{\tilde d}^{k_1,\ldots,k_m}=\inf_{A\in \Gamma^{(k_1,\ldots,k_m)}}\sup_{A} J_\beta,
\end{equation*}
it is easy to see that
$$
d_{\delta}^{k_1,\ldots,k_m}\leq {\tilde d}^{k_1,\ldots,k_m} \qquad \text{ for every $k_1,\ldots, k_m\in \N$, $\delta>0$}.
$$
Throughout this chapter, we will denote ${\tilde d}^{k_1,\ldots,k_m}$ simply by $\tilde d$.
\end{rem}

\subsection{Existence of sign-changing critical points of $J|_\Mah$ at level $d_\delta^{k_1,\ldots, k_m}$}
As a first step towards the proof of Theorem \ref{thm:infinitely_many_general}, we will now show that $d_\delta^{k_1,\ldots,k_m}$ is indeed a critical level for sufficiently small $\delta$. More precisely, we have the following.
\begin{thm}\label{thm:sign-changing_solution}
There exists $\delta>0$, $u\in H^1_0(\Omega;\R^m)$ and $\lambda_i\in \R$ such that
$$
-\Delta u_i +g(u_i)+f(u_i)\sum_{j\neq i} F(u_j)=\lambda_i u_i \quad\text{ in } \Omega,\qquad i=1,\ldots,m
$$
and $J(u)=d_\delta^{k_1,\ldots, k_m}$. Moreover, each $u_i$ is a sign-changing function.
\end{thm}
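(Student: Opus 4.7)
The plan is to run a minimax scheme on the $L^2$-constraint $\Mah$ for the $\sigma_i$-invariant functional $J$, combining the vector genus of Lemma \ref{lem:properties_of_vector_genus} with an invariance property of the cones $\pm\Peh_i$ in the spirit of Conti--Merizzi--Terracini. Argue by contradiction: suppose $d := d_\delta^{k_1,\ldots,k_m}$ is not realised by a critical point of $J|_\Mah$ lying outside $\overline{\Peh_\delta}$. The subcritical growth provided by $(fg2)$ gives, via Lagrange multipliers and Sobolev compactness, the Palais--Smale condition for $J|_\Mah$ at every finite level; in particular a PS sequence at level $d$ converges strongly in $H^1_0(\Omega;\R^m)$ up to a subsequence and its Lagrange multipliers $\lambda_i$ converge.

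The heart of the proof is the construction, on the set of non-critical points at level $d$, of a locally Lipschitz pseudo-gradient vector field $V$ tangent to $\Mah$ such that $V$ is $\sigma_i$-equivariant for every $i$ and the associated flow $\eta_t$ satisfies $\eta_t(\overline{\Peh_\delta})\subseteq \overline{\Peh_\delta}$ for all $t\geq 0$, provided $\delta>0$ is sufficiently small. Equivariance is achieved by symmetrising a classical pseudo-gradient over the abelian group generated by the $\sigma_i$, which is permissible since $J\circ\sigma_i=J$. Cone invariance is the crucial point, and this is where hypotheses $(fg1)$ and $(fg3)$ enter decisively: for $u$ in a thin neighbourhood of $\partial\Peh_i$, testing $J'(u)$ against $u_i^-$ (the natural direction increasing the $L^2$-distance to $\Peh_i$) and using the algebraic inequality $f(s)t+f(t)s\leq f(s)s+f(t)t$ together with $f'\geq 0$ (and the analogues for $g$) shows that the competitive nonlinearity uniformly decreases $\|u_i^-\|_{L^2}$ along $-\nabla J$, so that after a small equivariant perturbation $V$ can be taken tangent to, or pointing into, $\overline{\Peh_\delta}$ along $\partial\Peh_\delta$.

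Equipped with such a flow, the standard deformation lemma yields, for some $\epsilon>0$ small enough, a continuous map $\eta:\Mah\to\Mah$ with $\eta\circ\sigma_i=\sigma_i\circ\eta$ for every $i$, $\eta(\overline{\Peh_\delta})\subseteq \overline{\Peh_\delta}$, and $\eta(\{J\leq d+\epsilon\}\setminus \overline{\Peh_\delta})\subseteq \{J\leq d-\epsilon\}$. Choose $A\in \Gamma^{(k_1,\ldots,k_m)}$ with $\sup_{A\setminus\Peh_\delta} J<d+\epsilon$. By Lemma \ref{lem:properties_of_vector_genus}(ii), $A':=\overline{\eta(A)}\in \Gamma^{(k_1,\ldots,k_m)}$, and by Lemma \ref{lem:A_setminus_Cone_is_nonempty} the set $A'\setminus \Peh_\delta$ is nonempty. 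For any $u'=\eta(u)\in A'\setminus \Peh_\delta$, cone invariance forces $u\notin\overline{\Peh_\delta}$, so $J(u)<d+\epsilon$ and the deformation property gives $J(u')\leq d-\epsilon$. Hence $\sup_{A'\setminus \Peh_\delta} J\leq d-\epsilon$, contradicting the definition of $d$. The critical point produced satisfies $\dist_2(u,\Peh)\geq \delta$, and since $\dist_2(u,\pm\Peh_i)=\|u_i^{\mp}\|_{L^2}$, this forces every component $u_i$ to be sign-changing.

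The main obstacle is the cone-preservation step: converting the algebraic structure of $(fg1)$ and $(fg3)$ into a genuine quantitative monotonicity of the map $u\mapsto \|u_i^-\|_{L^2}$ along $-\nabla J$, uniformly on an annular neighbourhood of $\partial \Peh_\delta$ and uniformly away from the critical set at level $d$. All the other ingredients (Palais--Smale, abstract deformation lemma, genus preservation under equivariant maps) are then combined in a relatively standard way.
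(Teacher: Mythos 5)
Your overall architecture (vector genus, a $\sigma_i$-equivariant deformation on $\Mah$ that leaves a neighbourhood of the cones positively invariant, Lemma \ref{lem:A_setminus_Cone_is_nonempty} to guarantee $A'\setminus\Peh_\delta\neq\emptyset$, and the standard contradiction with the definition of $d_\delta^{k_1,\ldots,k_m}$) matches the paper's. But the step you yourself flag as ``the main obstacle'' --- cone preservation --- is exactly the step you have not proved, and the mechanism you propose for it is the one the paper explicitly discards: the authors remark that the gradient of $J$ constrained to $\Mah$ does \emph{not} appear to push $\Peh_\delta$ into itself, because of the sign of the terms $G(u_i)$ and $\sum_{j\neq i}F(u_i)F(u_j)$. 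Testing $J'(u)$ against $u_i^-$ and invoking $(fg1)$, $(fg3)$ does not by itself yield the required uniform decrease of $\|u_i^-\|_{L^2}$ along $-\nabla_{\Mah}J$ near $\partial\Peh_\delta$; in particular the Lagrange-multiplier term and the nonlinearities do not have a usable sign there. So as written the proposal has a genuine gap at its central point.

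What the paper does instead is to replace $\nabla J$ by a vector field $V=Id-K$, where $K(u)=w$ is defined by \emph{freezing} the coupling: $w_i$ is the unique solution of $-\Delta w_i+g(w_i)+f(w_i)\sum_{j\neq i}F(u_j)=\mu_i u_i$ with $\int_\Omega u_iw_i\,dx=1$ (existence by minimization, uniqueness by the monotonicity in $(fg1)$). Compactness of $K$ and a strong-maximum-principle plus Sobolev/measure argument give $\dist_2(K(u),\Peh)<\delta/2$ whenever $\dist_2(u,\Peh)<\delta$ and $J(u)\leq\tilde d+1$, which is where cone invariance of the flow of $-V$ really comes from; the algebraic inequality $(fg3)$ is used not for cone invariance but to verify the pseudogradient inequality $\langle\nabla J(u),V(u)\rangle\geq 2\|V(u)\|^2$, and the Palais--Smale property is established for $V$ (via compactness of $K$) rather than for $\nabla J|_{\Mah}$ directly. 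To complete your argument you would need either to supply this construction (or an equivalent one) or to actually prove the quantitative monotonicity of $u\mapsto\|u_i^-\|_{L^2}$ along your symmetrised gradient flow, which is not a routine consequence of $(fg1)$--$(fg3)$. A further minor point: your final step ``$u'=\eta(u)\notin\Peh_\delta$ forces $u\notin\overline{\Peh_\delta}$'' only gives $u\notin\Peh_\delta$ when invariance is stated for the open set $\Peh_\delta$, and elements of $\overline{\eta(A)}$ need not be of the form $\eta(u)$; both are easily repaired but should be addressed.
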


In order to prove this result we need to find a pseudogradient for $J$ over $\Mah$ for which the set $\Peh_\delta$ is positively invariant for the associated flow. Following \cite[Theorem 3.1]{contimerizziterracini}, such pseudogradient should be of the type $Id-K$, where $Id$ is the identity in $H^1_0(\Omega;\R^m)$ and $K$ is an operator such that $K(\Peh_\delta)\subseteq \Peh_{\delta/2}$ for small $\delta$. The gradient of $J$ constrained to $\Mah$ does not seem to satisfy this, due to the sign of the terms $G(u_i),\sum_{j\neq i}F(u_i)F(u_j)$, and hence this part is not straightforward.

For technical reasons,  we will work on the neighborhood of $\Mah$ in $H^1_0(\Omega;\R^m)$:
 $$
 \Mah^\ast=\{u\in H^1_0(\Omega;\R^m): \ \|u_i\|_{L^2(\Omega)}>\frac{1}{2} \forall i \}
 $$
 (observe that $u_i\not\equiv 0\ \forall i$ whenever $u\in \Mah^\ast$).

\begin{prop}
Given $u\in \Mah^\ast$ and $i\in \{1,\ldots,m\}$, there exists a unique solution $w_i\in H^1_0(\Omega)$, $\mu_i\in \R$ of the problem
\begin{equation}\label{eq:existence_and_uniqueness_for_K}
\left\{
\begin{array}{l}
-\Delta w_i +g(w_i)+ f(w_i)\sum_{j\neq i} F(u_j)=\mu_i u_i \ \text{ in } \Omega,\\[8pt]
\displaystyle \int_\Omega u_iw_i\, dx =1.
\end{array}
\right.
\end{equation}
\end{prop}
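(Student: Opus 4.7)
The plan is to realize $(w_i,\mu_i)$ as the unique minimizer and associated Lagrange multiplier of a strictly convex constrained minimization problem. Define
$$
E(w) := \frac{1}{2}\int_\Omega |\nabla w|^2\, dx + \int_\Omega G(w)\, dx + \int_\Omega F(w)\sum_{j\neq i} F(u_j)\, dx
$$
on $H^1_0(\Omega)$, together with the affine constraint set $K := \{w \in H^1_0(\Omega) : \int_\Omega u_i w\, dx = 1\}$; note that $K$ is nonempty (since $u_i\not\equiv 0$, as $u\in\Mah^\ast$), closed and convex. By Remark \ref{rem:genus_usual_definition}, the primitives $F$ and $G$ are even, nonnegative and, since $f'\geq 0$ and $g'\geq 0$ on $\R$ (combining $(fg1)$ with oddness), convex. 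Therefore $E$ is strictly convex (thanks to the Dirichlet term) and nonnegative, and the subcritical growth in $(fg2)$, with $p<2^\ast/2$ and $q<2^\ast$, ensures that every nonlinear integral is of class $C^1$ on $H^1_0(\Omega)$, with
$$
\langle E'(w),\varphi\rangle = \int_\Omega \nabla w\cdot\nabla\varphi + g(w)\varphi + f(w)\varphi\sum_{j\neq i}F(u_j)\, dx.
$$

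The next step is the direct method of the calculus of variations applied to $E$ on $K$. Coercivity is immediate from $E(w)\geq \tfrac{1}{2}\|\nabla w\|_{L^2}^2$. Given a minimizing sequence $\{w_n\}\subset K$, one extracts a weak $H^1_0$--limit $w_i$, which still lies in $K$ by continuity of the linear constraint; weak lower semicontinuity of $E$, obtained by combining w.l.s.c.\ of the Dirichlet term with Fatou's lemma applied to the nonnegative integrands $G(w_n)$ and $F(w_n)\sum_{j\neq i}F(u_j)$ (using Rellich to pass to an a.e.\ convergent subsequence), yields that $w_i$ is a minimizer. Strict convexity of $E$ together with convexity of $K$ then forces $w_i$ to be unique.

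Finally, the linear constraint $w\mapsto\int_\Omega u_iw - 1$ has non-vanishing differential because $u_i\not\equiv 0$, so the Lagrange multiplier rule at $w_i$ provides a unique $\mu_i\in\R$ such that $E'(w_i)=\mu_i u_i$ in $H^{-1}(\Omega)$; unwinding this identity via the formula for $E'$ yields exactly the PDE in \eqref{eq:existence_and_uniqueness_for_K}. Conversely, any solution $(w,\mu)$ of \eqref{eq:existence_and_uniqueness_for_K} is a critical point of the strictly convex functional $w\mapsto E(w)-\mu\int_\Omega u_iw$, hence its unique global minimizer $w_\mu$, and strict monotonicity of $\mu\mapsto\int_\Omega u_iw_\mu$ (obtained by testing the linearized equation with $\partial_\mu w_\mu$) singles out a unique $\mu$; this gives uniqueness of the pair. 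I do not expect a serious obstacle: the argument reduces to strict convexity plus a linear constraint, and the only care needed is the Sobolev/H\"older bookkeeping that guarantees integrability and $C^1$ regularity of the nonlinear terms, which is handled by the subcriticality in $(fg2)$.
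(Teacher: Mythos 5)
Your existence argument coincides with the paper's: the same constrained minimization of $w\mapsto\int_\Omega(\tfrac12|\nabla w|^2+G(w)+F(w)\sum_{j\neq i}F(u_j))\,dx$ over $\{w:\int_\Omega u_iw\,dx=1\}$ by the direct method (coercivity from the nonnegativity of $F,G$, weak lower semicontinuity, Lagrange multiplier rule). Where you diverge is uniqueness. The paper takes two solutions $(w,\mu_1)$, $(v,\mu_2)$ of \eqref{eq:existence_and_uniqueness_for_K}, subtracts the equations and tests with $w-v$: the constraint gives $\int_\Omega u_i(w-v)\,dx=0$, so the multiplier terms vanish, and the monotonicity of $f,g$ from $(fg1)$ leaves $\int_\Omega|\nabla(w-v)|^2\,dx\leq 0$, hence $w=v$ and then $\mu_1=\mu_2$. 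You argue in two stages instead: strict convexity identifies any solution as the minimizer $w_\mu$ of $E-\mu\int_\Omega u_i\,\cdot\,$, and then injectivity of $\mu\mapsto\int_\Omega u_iw_\mu$ pins down $\mu$. This is the same monotonicity computation in disguise, but your justification of the second stage --- ``testing the linearized equation with $\partial_\mu w_\mu$'' --- presupposes that $\mu\mapsto w_\mu$ is differentiable, which you have not established (the paper only proves such differentiability later, via the implicit function theorem, when showing that $K$ is of class $C^1$). The gap is easily closed without differentiability: if $\int_\Omega u_iw_{\mu_1}\,dx=\int_\Omega u_iw_{\mu_2}\,dx$, then $\langle E'(w_{\mu_1})-E'(w_{\mu_2}),w_{\mu_1}-w_{\mu_2}\rangle=(\mu_1-\mu_2)\int_\Omega u_i(w_{\mu_1}-w_{\mu_2})\,dx=0$, which by the Dirichlet term forces $w_{\mu_1}=w_{\mu_2}$ and then $\mu_1=\mu_2$ since $u_i\not\equiv 0$ --- and this is precisely the paper's one-step subtraction argument. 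So the proposal is essentially correct, but you should either supply the implicit-function-theorem step or, better, replace the detour through $\mu\mapsto w_\mu$ by the direct subtraction.
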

\begin{proof}
\emph{Existence:}
Fix $u\in \Mah^\ast$ and consider the minimization problem
$$
m:=\inf \Bigl\{ \int_\Omega (\frac{1}{2}|\nabla w|^2+ G(w)+F(w)\sum_{j\neq i}F(u_j))\, dx: w\in H^1_0(\Omega),\ \int_\Omega w u_i\, dx=1 \Bigr\} \geq 0.
$$
Take a minimizing sequence $(w_n)_n$, that is
$$\int_\Omega (\frac{1}{2}|\nabla w_n|^2+G(w_n)+F(w_n)\sum_{j\neq i}F(u_j))\, dx \to m \quad\text{ and }\quad \int_\Omega w_n u_i\, dx=1.$$
As $F,G$ are nonnegative function, we obtain that $(w_n)_n$ is a $H^1_0$--bounded sequence, thus there exists $\bar w$ such that, up to a subsequence,
$$
w_n\rightharpoonup \bar w \qquad \text{ weakly in } H^1_0(\Omega), \text{ and strongly in } L^2(\Omega) \text{ and } L^{2p}(\Omega).
$$
Therefore $\int_\Omega \bar w u_i\, dx =1$ and
\begin{eqnarray*}
m &\leq& \int_\Omega (\frac{1}{2}|\nabla \bar w|^2+G(\bar w)+F(\bar w)\sum_{j\neq i}F(u_j))\, dx\\
     &\leq& \liminf_{n\to \infty}  \int_\Omega (\frac{1}{2}|\nabla w_n|^2+G(w_n)+F(w_n)\sum_{j\neq i}F(u_j))\, dx=m.
\end{eqnarray*}
Thus $\bar w$ achieves $m$, and by the Lagrange multiplier rule we have that $\bar w$ solves \eqref{eq:existence_and_uniqueness_for_K} for some $\mu_i$.\\

\emph{Uniqueness:} Take $w$ and $v$ to be solutions of
$$
-\Delta w + g(w)+f(w)\sum_{j\neq i} F(u_j)=\mu_1 u_i,\qquad \int_\Omega w u_i\, dx =1
$$
and
$$
-\Delta v + g(v)+f(v)\sum_{j\neq i} F(u_j)=\mu_2 u_i,\qquad \int_\Omega v u_i\, dx =1.
$$
Subtracting the second equation from the first one, multiplying the result by $w-v$ and integrating by parts yields
\begin{multline*}
\int_\Omega |\nabla (w-v)|^2\, dx +\int_\Omega (g(w)-g(v))(w-v)\, dx+ \int_\Omega (f(w)-f(v))(w-v) \sum_{j\neq i} F(u_j) \, dx\\
=\int_\Omega \mu_1 u_i(v-w)\, dx - \int_\Omega \mu_2 u_i (v-w)\, dx =0.
\end{multline*}
As $s\mapsto f(s),g(s)$ are non-decreasing (\emph{cf.} ($fg1$)), then $(f(w)-f(v))(w-v)\geq 0$ and $(g(w)-g(v))(w-v)\geq 0$, whence
$$
\int_\Omega |\nabla (w-v)|^2\, dx=0, \qquad \text{ and } \qquad w\equiv v.
$$
Finally, observe that from \eqref{eq:existence_and_uniqueness_for_K} we deduce that each $\mu_i$ is uniquely determined by the expression
\begin{equation}\label{eq:mu_i_function_of_u_w}
\mu_i=\int_\Omega (|\nabla w_i|^2+g(w_i)w_i+f(w_i)w_i \sum_{j\neq i}F(u_j))\, dx.
\end{equation}
\end{proof}

We can now define the operator
$$
K:\Mah^\ast\to H^1_0(\Omega;\R^m);\qquad u \mapsto K(u)=w,
$$
that is, for each $u$, $K(u)=w$ is the unique solution of the system \eqref{eq:existence_and_uniqueness_for_K}.

Next we state and prove three properties of the operator $K$.

\begin{lemma}\label{lem:K_is_compact} ($K|_\Mah$ is a compact operator) Let $(u_n)_n\subset \Mah$ be a bounded sequence in $H^1_0(\Omega;\R^m)$. Then there exists $w\in H^1_0(\Omega;\R^m)$ such that, up to a subsequence,
$$
K(u_n)\to w \qquad \text{ strongly in } H^1_0(\Omega;\R^m).
$$
\end{lemma}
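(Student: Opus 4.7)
The plan is to produce a uniform $H^1_0$-bound on $w_n:=K(u_n)$ from the variational characterization used in the existence part above, pass to a weak limit, identify it as the solution of the limit problem, and finally upgrade to strong convergence through an energy identity obtained by testing the equation against $w_{n,i}$ itself. Throughout, the hypothesis $(fg2)$ will guarantee that every nonlinear term encountered is subcritical, so that compact Sobolev embeddings may be freely used.

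For the uniform bound, exploit the minimization problem defining $w_{n,i}$. Since $u_n\in\Mah$ gives $\int_\Omega u_{n,i}^2\,dx=1$, the function $u_{n,i}$ itself is admissible in that minimization, hence
$$
\int_\Omega\Bigl(\tfrac{1}{2}|\nabla w_{n,i}|^2+G(w_{n,i})+F(w_{n,i})\sum_{j\neq i}F(u_{n,j})\Bigr)\, dx \leq \int_\Omega\Bigl(\tfrac{1}{2}|\nabla u_{n,i}|^2+G(u_{n,i})+F(u_{n,i})\sum_{j\neq i}F(u_{n,j})\Bigr)\, dx.
$$
Under $(fg2)$ with $p<2^\ast/2$ and $q<2^\ast$, the Sobolev embeddings $H^1_0\hookrightarrow L^{2p}\cap L^{2q}$ together with the Cauchy--Schwarz inequality make the right-hand side uniformly bounded in $n$. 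Since $F,G\geq 0$, this yields a uniform bound on $\|\nabla w_{n,i}\|_{L^2}$; formula \eqref{eq:mu_i_function_of_u_w} then gives an analogous bound on $\mu_{n,i}$.

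Up to a subsequence one may then assume $u_{n,i}\rightharpoonup u_i$ and $w_{n,i}\rightharpoonup w_i$ in $H^1_0(\Omega)$, strongly in $L^r(\Omega)$ for every $r<2^\ast$ and a.e.\ in $\Omega$, together with $\mu_{n,i}\to\mu_i\in\R$. The Nemytskii continuity of $f,g,F$ in these subcritical spaces permits passage to the limit in the weak formulation of \eqref{eq:existence_and_uniqueness_for_K}, giving
$$
-\Delta w_i+g(w_i)+f(w_i)\sum_{j\neq i}F(u_j)=\mu_i u_i,\qquad \int_\Omega u_iw_i\,dx=1,
$$
where the constraint follows from strong $L^2$-convergence of $u_{n,i}$ paired against $w_{n,i}\rightharpoonup w_i$. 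To upgrade to strong convergence, test the equation for $w_{n,i}$ against $w_{n,i}$ itself to obtain
$$
\|\nabla w_{n,i}\|_{L^2}^2 = \mu_{n,i}-\int_\Omega g(w_{n,i})w_{n,i}\,dx-\int_\Omega f(w_{n,i})w_{n,i}\sum_{j\neq i}F(u_{n,j})\,dx,
$$
together with the analogous identity for $w_i$. Strong $L^r$-convergence of $w_{n,i}$ and $u_{n,j}$ for $r<2^\ast$, combined with the growth bounds in $(fg2)$, forces each right-hand term to converge to its limiting counterpart, whence $\|\nabla w_{n,i}\|_{L^2}\to\|\nabla w_i\|_{L^2}$. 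Weak convergence in the Hilbert space $H^1_0(\Omega)$ together with this norm convergence then gives the desired strong convergence $w_{n,i}\to w_i$ in $H^1_0(\Omega)$.

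The main technical obstacle is the passage to the limit in the coupled term $f(w_{n,i})w_{n,i}\sum_{j\neq i}F(u_{n,j})$, whose integrand mixes different components in an essentially quintic-type expression. The precise restrictions $p<\min\{2^\ast/2,3\}$ and $q<\min\{2^\ast,3\}$ in $(fg2)$ are calibrated exactly so that each factor lies in a Lebesgue space with compact Sobolev embedding into it, so that a standard weak--strong pairing takes care of the product. Aside from this technicality, the argument follows the familiar compactness scheme for constrained variational problems.
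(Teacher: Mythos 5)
Your proof is correct, but it departs from the paper's argument in both of its main steps, so a comparison is worthwhile. For the uniform $H^1_0$-bound on $w_{n,i}=K_i(u_n)$, the paper tests the Euler--Lagrange equation against $u_{n,i}$ and estimates $\mu_{n,i}$ directly via the growth conditions, obtaining an inequality of the form $\|w_{n,i}\|^2\leq C_2\|w_{n,i}\|+C_3+C_4\|w_{n,i}\|^{q-1}+C_5\|w_{n,i}\|^{p-1}$; this is where the restriction $p,q<3$ (so that $p-1,q-1<2$) is genuinely needed. You instead invoke the variational characterization of $K_i(u_n)$ as the minimizer of the constrained problem and use $u_{n,i}$ itself as a competitor (admissible because $\int_\Omega u_{n,i}^2\,dx=1$), which gives the bound using only $p<2^\ast/2$, $q<2^\ast$ and the nonnegativity of $F,G$; this is legitimate since the existence proof produces $K_i(u)$ as that minimizer and uniqueness identifies the two, and it is arguably cleaner, though it leans on the minimization rather than on the PDE alone. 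For the upgrade to strong convergence, the paper tests the equation against $w_{n,i}-\bar w_i$ and reads off $\int_\Omega\nabla w_{n,i}\cdot\nabla(w_{n,i}-\bar w_i)\,dx\to 0$ directly, which avoids having to pass to the limit in the equation; you instead identify the limit equation and deduce convergence of the Dirichlet norms from the two energy identities, concluding by weak convergence plus norm convergence in a Hilbert space. Both routes are standard and the subcritical exponents in $(fg2)$ justify every limit passage you perform, so the argument stands as written.
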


\begin{proof}
Let $(u_n)_n=(u_{1,n},\ldots,u_{m,n})_n$ be as in the statement and let $w_n:=K(u_n)$. Multiplying \eqref{eq:existence_and_uniqueness_for_K} by $u_{i,n}$ and integrating by parts, we deduce

\begin{eqnarray*}
&&\int_\Omega |\nabla w_{i,n}|^2 \leq \int_\Omega (|\nabla w_{i,n}|^2+g(w_{i,n})w_{i,n}+f(w_{i,n})w_{i,n}\sum_{j\neq i}F(u_{j,n}))\, dx=\mu_{i,n}\\
&& \quad = \int_\Omega (\nabla w_{i,n}\cdot \nabla u_{i,n} +g(w_{i,n})u_{i,n}+f(w_{i,n})u_{i,n}\sum_{j\neq i}F(u_{j,n}) )\, dx\\
&& \quad						 \leq  \|w_{i,n}\|_{H^1_0(\Omega)}\| u_{i,n}\|_{H^1_0(\Omega)}+  C_1 \int_\Omega (1+|w_{i,n}|^{q-1})|u_{i,n}|\, dx\\
&&\qquad+C_1\int_\Omega (1+|w_{i,n}|^{p-1})|u_{i,n}|(1+\sum_{j\neq i}|u_{j,n}|^p  )\, dx\\
&& \quad \leq C_2\|w_{i,n}\|_{H^1_0(\Omega)} + C_1\int_\Omega (|u_{i,n}| + |w_{i,n}|^{q-1}|u_{i,n}|)\,dx+ \\
&& \qquad C_1\int_\Omega (|u_{i,n}|+|w_{i,n}|^{p-1} |u_{i,n}| +|u_{i,n}|\sum_{j\neq i}|u_{j,n}|^p +  |w_{i,n}|^{p-1}|u_{i,n}|\sum_{j\neq i}|u_{j,n}|^p )\, dx \\
&& \quad \leq C_2\|w_{i,n}\|_{H^1_0(\Omega)} + C_3+ C_1 \|u_{1,n}\|_{L^q(\Omega)}\|w_{i,n}\|_{L^q(\Omega)}^{q-1} +C_1 \|u_{i,n}\|_{L^p(\Omega)} \|w_{i,n}\|^{p-1}_{L^p(\Omega)}\\
&& \qquad + C_1\|u_{i,n}\|_{L^2(\Omega)}\sum_{j\neq i}\|u_{j,n}\|_{L^{2p}(\Omega)}^p +C_1 \|w_{i,n}\|^{p-1}_{L^{2p}(\Omega)}\|u_{i,n}\|_{L^{2p}(\Omega)}\sum_{j\neq i} \| u_{j,n}\|_{L^{2p}(\Omega)}^p\\
&&\quad 						 \leq C_2\|w_{i,n}\|_{H^1_0(\Omega)} + C_3 + C_4  \|w_{i,n}\|^{q-1}_{H^1_0(\Omega)}+C_5  \|w_{i,n}\|^{p-1}_{H^1_0(\Omega)}.
\end{eqnarray*}
As $p,q<3$, then $p-1,q-1<2$ and we conclude that $(w_n)_n$ is $H^1_0$--bounded. Hence also all $\mu_{i,n}$ are bounded (recall from \eqref{eq:mu_i_function_of_u_w} their expressions) and, up to a subsequence, $w_n$ converges weakly in $H^1_0$ to some function $\bar w$.
Multiplying this time row $i$ in \eqref{eq:existence_and_uniqueness_for_K} by $w_{i,n}-\bar w_i$, we see that
\begin{multline*}
\int_\Omega \nabla w_{i,n}\cdot \nabla (w_{i,n}-\bar  w_i)\, dx=-\int_\Omega g(w_{i,n})( w_{i,n}-\bar w_i)\, dx -\int_\Omega f(w_{i,n}) ( w_{i,n}-\bar w_i) \sum_{j\neq i}F( u_{j,n}) \, dx \\
													+ \int_\Omega \mu_{i,n}u_{i,n} ( w_{i,n}-\bar w_i)\, dx\to 0,
\end{multline*}
and therefore $w_{i,n}\to \bar w_i$ strongly in $H^1_0(\Omega)$.
\end{proof}

\begin{lemma}
The operator $K$ is of class $C^1$.
\end{lemma}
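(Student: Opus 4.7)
The plan is to apply the implicit function theorem to the map
$$
\Phi:\Mah^\ast\times H^1_0(\Omega;\R^m)\times\R^m\to H^{-1}(\Omega;\R^m)\times\R^m,
$$
whose $i$-th component is
$$
\Phi_i(u,w,\mu)=\Bigl(-\Delta w_i+g(w_i)+f(w_i)\sum_{j\neq i}F(u_j)-\mu_i u_i,\ \int_\Omega u_iw_i\,dx-1\Bigr).
$$
By the preceding proposition, for every $u\in\Mah^\ast$ the equation $\Phi(u,w,\mu)=0$ has a unique solution, namely $(w,\mu)=(K(u),\mu(u))$. Once I show that $\Phi$ is of class $C^1$ and that $D_{(w,\mu)}\Phi$ is everywhere invertible, the implicit function theorem will deliver $K\in C^1(\Mah^\ast;H^1_0(\Omega;\R^m))$.

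First I would verify the $C^1$ character of $\Phi$. The terms affine in $(w,\mu)$ and the quadratic constraint are smooth. For the remaining pieces I would invoke standard Nemytskii theory: since $f,g\in C^1(\R)$ and $(fg2)$ provides subcritical growth (in particular $p<2^\ast/2$ and $q<2^\ast$), the substitution operators $w\mapsto g(w)$ and $w\mapsto f(w)$ are $C^1$ from $H^1_0(\Omega)$ into $H^{-1}(\Omega)$, with derivatives $\phi\mapsto g'(w)\phi$ and $\phi\mapsto f'(w)\phi$. The coupling term $u\mapsto f(w_i)F(u_j)$ is handled by the same Hölder bookkeeping; it is precisely the constraint $p<2^\ast/2$ that ensures $f(w_i)F(u_j)\in L^{(2^\ast)'}\hookrightarrow H^{-1}$.

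Next I would compute the partial differential $D_{(w,\mu)}\Phi(u,w,\mu)(\phi,\nu)$, whose $i$-th component reads
$$
\Bigl(-\Delta\phi_i+g'(w_i)\phi_i+f'(w_i)\phi_i\sum_{j\neq i}F(u_j)-\nu_i u_i,\ \int_\Omega u_i\phi_i\,dx\Bigr).
$$
Setting $V_i:=g'(w_i)+f'(w_i)\sum_{j\neq i}F(u_j)$, hypothesis $(fg1)$ combined with the nonnegativity of $F$ (Remark \ref{rem:genus_usual_definition}) ensures $V_i\geq 0$. Thus $L_i:=-\Delta+V_i$ is coercive and self-adjoint, hence an isomorphism $H^1_0(\Omega)\to H^{-1}(\Omega)$ by Lax–Milgram. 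The system decouples over $i$, so given $(h_i,\eta_i)\in H^{-1}(\Omega)\times\R$ I would write $\phi_i=L_i^{-1}h_i+\nu_i L_i^{-1}u_i$ and impose the constraint, reducing to the scalar equation
$$
\nu_i\int_\Omega u_i L_i^{-1}u_i\,dx=\eta_i-\int_\Omega u_i L_i^{-1}h_i\,dx.
$$
Since $u_i\not\equiv 0$ (because $u\in\Mah^\ast$) and $L_i^{-1}$ is positive definite, the coefficient of $\nu_i$ is strictly positive, which uniquely and continuously determines $\nu_i$ and then $\phi_i$. This establishes the required isomorphism, and the implicit function theorem concludes the proof.

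The main technical bookkeeping lies in the Fréchet differentiability of the Nemytskii operators, which is routine under the subcritical growth conditions $(fg2)$; the invertibility of the linearization, which might otherwise be delicate, comes essentially for free thanks to the monotonicity hypothesis $(fg1)$, which forces the linearized potential $V_i$ to be nonnegative.
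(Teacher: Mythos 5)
Your proof is correct and follows essentially the same route as the paper: both apply the implicit function theorem to the defining system for $K$ and invert the linearization $-\Delta+g'(w_i)+f'(w_i)\sum_{j\neq i}F(u_j)$ using the nonnegativity of the potential coming from $(fg1)$ and the positivity of $F$. The only differences are cosmetic (you map into $H^{-1}$ rather than composing with $(-\Delta)^{-1}$, and you justify explicitly that $\int_\Omega u_i L_i^{-1}u_i\,dx>0$, a point the paper's surjectivity step uses implicitly when dividing by $\int_\Omega \tilde w_2 u_i\,dx$).
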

\begin{proof}
We will apply the Implicit Function Theorem to the $C^1$ map
\begin{eqnarray*}
&\Psi:\Mah^\ast \times H^1_0(\Omega)\times \R\to H^1_0(\Omega)\times \R; \\
&\Psi(u,v,\lambda)=\Bigl(v+(-\Delta)^{-1}(g(v)+f(v)\sum_{j\neq i}F(u_j)-\lambda u_i),\int_\Omega v u_i \, dx-1\Bigr).
\end{eqnarray*}
Observe that \eqref{eq:existence_and_uniqueness_for_K} holds if and only if $\Psi(u,w_i,\mu_i)=(0,0)$. Take such a zero of $\Psi$ and let us compute the derivative of $\Psi$ with respect to $v,\lambda$ at the point $(u,w_i,\mu_i)$ in the direction $(\bar w,\bar \lambda)$. We obtain a map $\Phi:H^1_0(\Omega)\times \R\to H^1_0(\Omega)\times \R$ given by
\begin{eqnarray*}
\Phi(\bar w,\bar \lambda)  &:=&D_{v,\lambda} \Psi(u,w_i,\mu_i)(\bar w,\bar \lambda)\\
					 &=&\Bigl(\bar w+(-\Delta)^{-1}(\bar wg'(w_i)+ \bar wf'(w_i)\sum_{j\neq i}F(u_j)-\bar \lambda u_i),\int_\Omega \bar w u_i\, dx \Bigr).
\end{eqnarray*}
Let us prove that $\Phi$ is a bijective map.

\emph{$\Phi$ is injective:} If $\Phi(\bar w,\bar \lambda)=(0,0)$, then we can multiply the equation
\begin{equation}\label{eq:Phi_injective}
-\Delta \bar w+\bar wg'(w_i)+ \bar w  f'(w_i)\sum_{j\neq i}F(u_j)-\bar \lambda u_i=0
\end{equation}
by $\bar w$, yielding
$$
\|\bar w\|_{H^1_0(\Omega)}^2\leq  \|\bar w\|_{H^1_0(\Omega)}^2 +\int_\Omega \bar w^2g'(w_i)\, dx + \int_\Omega \bar w^2 f'(w_i) \sum_{j\neq i}F(u_j)\, dx =\bar \lambda \int_\Omega u_i \bar w\, dx=0,
$$
whence $\bar w\equiv 0$. Again by using \eqref{eq:Phi_injective} we obtain $\bar \lambda u_i=0$, thus also
$$
\bar \lambda=\bar \lambda \int_\Omega u_i w_i\, dx=0.
$$

\emph{$\Phi$ is surjective}: Take $(f,c)\in H^1_0(\Omega)\times \R$ and let $\tilde w_1,\tilde w_2$ be solutions of the (linear) problems
\begin{eqnarray*}
-\Delta \tilde w_1+ \tilde w_1g'(w_i)+\tilde w_1 f'(w_i)\sum_{j\neq i}F(u_j)=f\\
-\Delta \tilde w_2+\tilde w_2g'(w_i)+\tilde w_2 f'(w_i)\sum_{j\neq i}F(u_j)=u_i.
\end{eqnarray*}
Take moreover $\kappa=\Bigl(c-\int_\Omega \tilde w_1 u_i\, dx\Bigr)/ \int_\Omega  \tilde w_2 u_i  \, dx$. Then $\Phi(\tilde w_1+\kappa \tilde w_2,\kappa)=(f,c).$
\end{proof}

\begin{lemma}\label{lem:K_contracts_neigh_of_cone}
There exists $\delta>0$ (which can be chosen arbitrary small) such that
\begin{equation}\label{eq:K_contracts_neigh_of_cone}
\dist_2(K(u),\Peh)<\delta/2,\qquad \forall\ u\in \Mah,\ J(u)\leq \tilde d+1,\ \dist_2(u,\Peh)<\delta.
\end{equation}
\end{lemma}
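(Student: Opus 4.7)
\emph{Setup.} Since $u \in \Peh_\delta$, the plan is to pick $i \in \{1, \ldots, m\}$ and $\sigma \in \{\pm 1\}$ with $\dist_2(u, \sigma \Peh_i) < \delta$, and then exploit that the operator $K$ commutes with every involution $\sigma_j$---which follows readily from \eqref{eq:existence_and_uniqueness_for_K} by the oddness of $f, g$, the evenness of $F$, and the observation that only the $j$-th equation sees the sign of $u_j$---to reduce to the case $\sigma = +1$, i.e.\ $\|u_i^-\|_{L^2} < \delta$. The goal is then to prove $\|w_i^-\|_{L^2} < \delta/2$, where $w := K(u)$; this yields $\dist_2(K(u), \Peh_i) < \delta/2$ and hence the claim.

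\emph{Main test with $w_i^-$.} I would multiply the $i$-th line of \eqref{eq:existence_and_uniqueness_for_K} by $w_i^-$, using $\int_\Omega \nabla w_i \cdot \nabla w_i^- \, dx = -\|\nabla w_i^-\|_{L^2}^2$ together with the oddness identities $g(w_i) w_i^- = -g(w_i^-) w_i^-$ and $f(w_i) w_i^- = -f(w_i^-) w_i^-$, to obtain
\[
\|\nabla w_i^-\|_{L^2}^2 + \int_\Omega g(w_i^-) w_i^- \, dx + \int_\Omega f(w_i^-) w_i^- \sum_{j \neq i} F(u_j) \, dx = \mu_i \int_\Omega (u_i^- - u_i^+) w_i^- \, dx.
\]
By $(fg1)$ and $F \geq 0$, the three terms on the left are nonnegative, and since $\mu_i \geq 0$ by \eqref{eq:mu_i_function_of_u_w} and $u_i^+ w_i^- \geq 0$, the right side is bounded by $\mu_i \|u_i^-\|_{L^2} \|w_i^-\|_{L^2} < \mu_i \delta \|w_i^-\|_{L^2}$. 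Dropping the lower-order terms and applying Poincar\'e gives $\|w_i^-\|_{L^2} \leq C_\Omega \mu_i \delta$. To close, I would bound $\mu_i$ uniformly: the energy bound $J(u) \leq \tilde d + 1$ controls $\|u\|_{H^1_0}$, and rerunning the a priori estimates in the proof of Lemma \ref{lem:K_is_compact} yields $\|w\|_{H^1_0} \leq M(\tilde d)$; combined with \eqref{eq:mu_i_function_of_u_w}, this produces $\mu_i \leq M'(\tilde d)$. Taking $\delta$ sufficiently small then delivers $\|w_i^-\|_{L^2} < \delta/2$.

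\emph{Main obstacle.} The genuinely delicate point is the factor $1/2$. Since $\int u_i w_i = 1$ together with $\|u_i\|_{L^2} = 1$ forces $\|w_i\|_{L^2} \geq 1$ by Cauchy-Schwarz, one has $\mu_i \geq \lambda_1(\Omega) = 1/C_\Omega$, so the naive product $C_\Omega \mu_i$ is bounded below by $1$; the linear bound $\|w_i^-\|_{L^2} \leq C_\Omega \mu_i \delta$ therefore cannot by itself give $\delta/2$. The factor $1/2$ should emerge from a refinement---for instance, Gagliardo--Nirenberg interpolation on $\int u_i^- w_i^-$ that trades the $L^2$-smallness of $u_i^-$ for higher-$L^p$ smallness (via the $H^1_0$-boundedness of $u_i^-$), or by retaining the discarded nonnegative terms $\int g(w_i^-) w_i^-$ and $\int f(w_i^-) w_i^- \sum_{j \neq i} F(u_j)$ as an additional source of coercivity---to upgrade the linear bound into a super-linear one $\|w_i^-\|_{L^2} = o(\delta)$ as $\delta \to 0^+$, which the ``$\delta$ arbitrarily small'' clause in the statement then exploits.
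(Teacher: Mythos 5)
Your setup and your main energy test are exactly the ones the paper uses (testing the $i$-th equation of \eqref{eq:existence_and_uniqueness_for_K} with $w_i^-$, using the sign of $\mu_i$ and $u_i^+w_i^-\geq 0$ to bound the right-hand side by $\mu_i\|u_i^-\|_{L^2}\|w_i^-\|_{L^2}$), and you have correctly diagnosed the real difficulty: the resulting linear bound $\|w_i^-\|_{L^2}\leq C\mu_i\delta$ has a constant that cannot be pushed below $1/2$. But at that point your proof stops; the two refinements you gesture at do not close the gap. Interpolating $\int u_i^-w_i^-$ differently only shuffles which norms of $u_i^-$ and $w_i^-$ appear and still yields a bound that is linear in $\delta$ with an uncontrolled constant; and the discarded terms $\int g(w_i^-)w_i^-$ and $\int f(w_i^-)w_i^-\sum_{j\neq i}F(u_j)$ give no uniform coercivity, since the hypotheses $(fg1)$--$(fg3)$ allow $g\equiv 0$ and allow $f$ to be superlinear near $0$, so these terms can be negligible compared with $\|w_i^-\|_{L^2}^2$.

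The paper's way out is a compactness-plus-maximum-principle argument that you are missing. One argues by contradiction along sequences $\delta_n\to 0$, $u_n$ violating the claim with (after using the $\sigma_j$-equivariance of $K$, as you do) $\|u_{1,n}^-\|_{L^2}<\delta_n$. By Lemma \ref{lem:K_is_compact}, $w_n=K(u_n)\to\bar w$ strongly in $H^1_0$ and $u_n\to\bar u$ with $\bar u_1\geq 0$; the limit equation $-\Delta\bar w_1+g(\bar w_1)+f(\bar w_1)\sum_{j\geq 2}F(\bar u_j)=\bar\mu_1\bar u_1\geq 0$ together with $f(s),g(s)={\rm O}(s)$ and the strong maximum principle gives $\bar w_1>0$, hence $|\{w_{1,n}<0\}|\to 0$. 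This is the source of the missing small factor: applying the Sobolev--H\"older inequality \emph{on the set} $\{w_{1,n}<0\}$ gives $\|w_{1,n}^-\|_{L^2}^2\leq C_S^2\,|\{w_{1,n}<0\}|^{(2^*-2)/2^*}\|\nabla w_{1,n}^-\|_{L^2}^2$, and feeding this into your energy test yields $\|w_{1,n}^-\|_{L^2}\leq C'\,|\{w_{1,n}<0\}|^{(2^*-2)/2^*}\,\delta_n=o(1)\cdot\delta_n<\delta_n/2$ for large $n$, the desired contradiction. So the ``superlinear in $\delta$'' upgrade you hoped for is real, but it comes from the shrinking measure of the negativity set of $K(u)$, not from interpolation or from the lower-order terms.
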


\begin{proof}
1. Suppose, in view of a contradiction, that there exists $\delta_n\to 0$ and $u_n\in \Mah$ with $J(u_n)\leq \tilde d+1$, $\dist_2(u_n,\Peh)<\delta_n$ and $\dist_2(K(u_n),\Peh)\geq \delta_n/2$. Suppose moreover, without loss of generality, that
$$
\dist_2(u_n,P)=\|u_{1,n}^-\|_2 \ (<\delta_n\to 0).
$$
Let $w_n=K(u_n)$ and $\mu_{i,n}:=\int_\Omega (|\nabla w_{i,n}|^2+g(w_{i,n})w_{i,n}+f(w_{i,n})w_{i,n}\sum_{j\neq i}F(u_{j,n})) \, dx $ for every $i$. We deduce from Lemma \ref{lem:K_is_compact} the existence of $\bar u$, $\bar w$, and $\bar \mu_i$ such that
$$
u_n\to \overline u \quad \text{ weakly in $H^1_0(\Omega;\R^m)$,\ strongly in $L^2(\Omega;\R^m)$ and $L^{2p}(\Omega;\R^m)$};
$$
$$
w_n\to \bar w \quad \text{ strongly in $H^1_0(\Omega;\R^m)$, and $\mu_{i,n}\to \bar \mu_i$ in $\R$.}
$$
Observe that
$$
-\Delta \bar w_1 +g(\bar w_1)+ f(\bar w_1) \sum_{j\geq 2} F(\bar u_j) =\bar  \mu_1 \bar u_1\geq 0,
$$
and (from the hypotheses made on $f,g$) $f(s),g(s)={\rm O}(s)$ as $s\to 0$. Hence by the strong maximum principle $\bar w_1>0$, and therefore we can conclude that $|\{w_{1,n}<0\}|\to 0$ as $n\to \infty$.

2. Observe now that in general, by using both H\"older and Sobolev inequalities we have
$$
\|u\|_{L^2(\Omega)}^2\leq C^2_S\, p(|\Omega|)\|u\|_{H^1_0(\Omega)}^2,
$$
where $p(|\Omega|)=|\Omega|^{(2^\ast-2)/2^\ast}$ and $C_S$ is the best Sobolev constant of the embedding $H^1_0(\Omega)\hookrightarrow L^{2^\ast}(\Omega)$. This fact together with \eqref{eq:existence_and_uniqueness_for_K} allows us to obtain
\begin{eqnarray*}
\| w_{1,n}^-\|_{L^2(\{w_{1,n}<0\})}^2 &\leq & C^2_S\,p(|\{w_{1,n}<0\}|) \int_\Omega |\nabla w_{1,n}^-|^2\, dx\\
						   &\leq& C^2_S\,p(|\{w_{1,n}<0\}|) \int_\Omega (|\nabla w_{1,n}^-|^2-g(w_{1,n})w_{1,n}^--f(w_{1,n})w_{1,n}^- \sum_{j\geq 2}F(u_{j,n}))\, dx\\
						   &=& -\mu_{1,n} C^2_S\,p(|\{w_{1,n}<0\}|) \int_\Omega u_{1,n} w_{1,n}^-\, dx\\
						   &\leq& \mu_{1,n} C^2_S\, p(|\{w_{1,n}<0\}|) \int_\Omega u_{1,n}^- w_{1,n}^-\, dx \\
						   &\leq& \mu_{1,n} C^2_S\, p(|\{w_{1,n}<0\}|) \|u_{1,n}^-\|_{L^2(\Omega)}\|w_{1,n}^-\|_{L^2(\{w_{1,n}<0\})}\\
						   &\leq & C'\, p(|\{w_{1,n}<0\}|) \, \delta_n\, \|w_{1,n}^-\|_{L^2(\{w_{1,n}<0\})}
\end{eqnarray*}
and hence $\|w_{1,n}^-\|_{L^2(\Omega)}<\delta_n/2$ for sufficiently large $n$, which is a contradiction.
\end{proof}

Now define
$$
V:\Mah^\ast \to H^1_0(\Omega; \R^m); \qquad u\mapsto u-K(u).
$$
Observe that, for $u\in \Mah$,
$$
V(u)=0 \iff u \text{ solves } \eqref{eq: BEC_more_general}.
$$
Next we show that $V$ satisfies the Palais-Smale condition and that it is a pseudogradient for $J$ over $\Mah$.

\begin{lemma}[Palais-Smale type condition]\label{lem:palais_smale} Let $u_n\in \Mah$ be such that, as $n\to \infty$,
$$
J(u_n)\to c <\infty \qquad \text{ and } \qquad V(u_n)\to 0 \text{ in } H^1_0(\Omega;\R^m).
$$
Then there exists $u\in \Mah$ such that, up to a subsequence, $u_n\to u$ in  $H^1_0(\Omega;\R^m).$
\end{lemma}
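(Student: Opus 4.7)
The plan is straightforward: reduce everything to Lemma \ref{lem:K_is_compact} by first establishing $H^1_0$-boundedness of the sequence, and then use the hypothesis $V(u_n) \to 0$ to upgrade compactness of $K(u_n)$ into convergence of $u_n$ itself.

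\textbf{Step 1: $H^1_0$--boundedness of $(u_n)$.} By Remark \ref{rem:genus_usual_definition}, $F$ and $G$ are non-negative, so every term in the definition of $J$ is non-negative. Hence, for every $n$,
$$
\sum_{i=1}^m \int_\Omega |\nabla u_{i,n}|^2\, dx \;\leq\; J(u_n),
$$
and since $J(u_n) \to c < \infty$, the sequence $(u_n)$ is bounded in $H^1_0(\Omega;\R^m)$.

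\textbf{Step 2: Strong convergence via $K$.} Since $u_n \in \Mah$ and $(u_n)$ is $H^1_0$-bounded, Lemma \ref{lem:K_is_compact} applies: up to a subsequence, there exists $w \in H^1_0(\Omega;\R^m)$ with
$$
K(u_n) \to w \qquad \text{strongly in } H^1_0(\Omega;\R^m).
$$
By the definition of $V$, we have $u_n = K(u_n) + V(u_n)$. Combining the strong convergence of $K(u_n)$ with the assumption $V(u_n)\to 0$ in $H^1_0(\Omega;\R^m)$, we conclude
$$
u_n \to w \qquad \text{strongly in } H^1_0(\Omega;\R^m).
$$

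\textbf{Step 3: The limit lies in $\Mah$.} Strong $H^1_0$-convergence implies strong $L^2$-convergence, so $\|w_i\|_{L^2(\Omega)} = \lim_n \|u_{i,n}\|_{L^2(\Omega)} = 1$ for every $i$. Hence $w \in \Mah$, and setting $u := w$ completes the proof.

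There is essentially no obstacle here: the only point worth checking carefully is the boundedness step, which is trivial thanks to the non-negativity of $F$ and $G$ recorded in Remark \ref{rem:genus_usual_definition}. Compactness of $K$ on $H^1_0$-bounded subsets of $\Mah$ (Lemma \ref{lem:K_is_compact}) does all of the real work, and the hypothesis $V(u_n)\to 0$ is exactly what is needed to transfer that compactness from $K(u_n)$ to $u_n$.
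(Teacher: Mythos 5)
Your proof is correct and follows essentially the same route as the paper: $H^1_0$-boundedness of $(u_n)$ from $J(u_n)\to c$ (non-negativity of $F,G$), followed by the compactness of $K$ on bounded subsets of $\Mah$ (Lemma \ref{lem:K_is_compact}). Your concluding step, reading off $u_n=K(u_n)+V(u_n)\to w+0$ directly, is in fact slightly more streamlined than the paper's, which first extracts a weak limit $u$ of $u_n$ and then pairs $V(u_n)$ with $u_n-u$ to upgrade weak to strong convergence.
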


\begin{proof}
Since $\|u_n\|_{H^1_0(\Omega)}^2\leq J(u_n)\leq c+1<\infty$ for large $n$, there exists $u\in \Mah$ and $w\in H^1_0(\Omega; \R^m)$ such that, up to a subsequence,
$$
u_n\rightharpoonup u \text{ weakly in } H^1_0(\Omega;\R^m), \quad \text{ and } w_n:=K(u_n)\to w \text{ strongly in } H^1_0(\Omega;\R^m).
$$
Then we have, as $n\to \infty$,
\begin{equation*}
{\rm o}(1)=\langle V(u_n),u_n-u\rangle_{H^1_0(\Omega)}=\langle u_n,u_n-u \rangle_{H^1_0(\Omega)}-\langle w_n,u_n-u\rangle_{H^1_0(\Omega)}
\end{equation*}
Since the last term tends to zero as $n\to \infty$, the proof is finished.
\end{proof}

\begin{lemma}\label{lem:V_is_pseudogradient}
We have
$$
\langle \nabla J(u),V(u) \rangle_{H^1_0(\Omega)}\geq  2\|V(u)\|_{H^1_0(\Omega)}^2 \qquad \text{ whenever } u\in \Mah.
$$
\end{lemma}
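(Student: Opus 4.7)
The plan is to compute $\langle\nabla J(u),V(u)\rangle_{H^1_0(\Omega)}=J'(u)[u-w]$ directly, where $w=K(u)$, and exploit simultaneously the Euler--Lagrange system \eqref{eq:existence_and_uniqueness_for_K} and the two normalizations $\int_\Omega u_i^2\,dx=1$ (from $u\in\Mah$) and $\int_\Omega u_iw_i\,dx=1$ (from the definition of $K$). These two normalizations together force the orthogonality $\int_\Omega u_i(u_i-w_i)\,dx=0$, so the Lagrange multipliers $\mu_i$ will end up contributing nothing to the final identity; notice that the same orthogonality shows that $V(u)$ is tangent to $\Mah$ at $u$, which is why the $H^1_0$--inner product is the natural pairing to use.

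A routine differentiation gives, for every $\phi\in H^1_0(\Omega;\R^m)$,
$$\tfrac{1}{2}J'(u)[\phi]=\sum_{i=1}^m\int_\Omega\nabla u_i\cdot\nabla\phi_i\,dx+\sum_{i=1}^m\int_\Omega g(u_i)\phi_i\,dx+\sum_{i=1}^m\int_\Omega f(u_i)\phi_i\sum_{j\neq i}F(u_j)\,dx,$$
the factor $2$ in front of the mixed term being produced by the $i\leftrightarrow j$ symmetry of the double sum $\sum_{i\neq j}\int F(u_i)F(u_j)\,dx$. Setting $\phi=u-w$ and using the pointwise splitting $\nabla u_i\cdot\nabla(u_i-w_i)=|\nabla(u_i-w_i)|^2+\nabla w_i\cdot\nabla(u_i-w_i)$ already isolates the target $\|V(u)\|_{H^1_0(\Omega)}^2=\sum_i\int_\Omega|\nabla(u_i-w_i)|^2\,dx$ plus a remainder that mixes $u_i$ with $w_i$.

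To dispose of the remainder I would test \eqref{eq:existence_and_uniqueness_for_K} against $u_i-w_i$ and integrate by parts, getting
$$\int_\Omega\nabla w_i\cdot\nabla(u_i-w_i)\,dx+\int_\Omega g(w_i)(u_i-w_i)\,dx+\int_\Omega f(w_i)(u_i-w_i)\sum_{j\neq i}F(u_j)\,dx=\mu_i\int_\Omega u_i(u_i-w_i)\,dx=0,$$
by the orthogonality singled out above. Substituting this identity back yields the clean decomposition
$$\tfrac{1}{2}\langle\nabla J(u),V(u)\rangle_{H^1_0(\Omega)}=\|V(u)\|_{H^1_0(\Omega)}^2+\sum_{i=1}^m\int_\Omega[g(u_i)-g(w_i)](u_i-w_i)\,dx+\sum_{i=1}^m\int_\Omega[f(u_i)-f(w_i)](u_i-w_i)\sum_{j\neq i}F(u_j)\,dx.$$

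Both remainder sums are non--negative: by hypothesis $(fg1)$ together with the oddness of $f$ and $g$, the maps $s\mapsto f(s)$ and $s\mapsto g(s)$ are non--decreasing on all of $\R$, so $[f(u_i)-f(w_i)](u_i-w_i)\geq 0$ and $[g(u_i)-g(w_i)](u_i-w_i)\geq 0$ pointwise; moreover $F\geq 0$ by Remark~\ref{rem:genus_usual_definition}. The stated inequality $\langle\nabla J(u),V(u)\rangle_{H^1_0(\Omega)}\geq 2\|V(u)\|_{H^1_0(\Omega)}^2$ is then immediate. I do not foresee a serious obstacle here; the only delicate point is recognizing that the combined normalizations $\int_\Omega u_i^2\,dx=\int_\Omega u_iw_i\,dx=1$ are exactly what is needed to cancel the multipliers $\mu_i$ and turn $V$ into a genuine pseudogradient for $J\rst{\Mah}$.
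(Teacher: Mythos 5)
Your proposal is correct and follows essentially the same route as the paper: both exploit the orthogonality $\int_\Omega u_i(u_i-w_i)\,dx=0$ coming from the two normalizations, test the defining equation for $w_i=K_i(u)$ against $u_i-w_i$, and use the monotonicity of $f$ and $g$ (the paper invokes it in the equivalent form $f(s)t+f(t)s\leq f(s)s+f(t)t$). The only difference is the order of operations --- you substitute the equation first and then apply monotonicity to the remainder, while the paper applies the monotonicity inequality first and then substitutes --- which is immaterial.
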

\begin{proof}
First of all observe that, by \eqref{eq:existence_and_uniqueness_for_K},
$$
\int_\Omega u_i(u_i-w_i)\, dx=\int_\Omega u_i^2\, dx-\int_\Omega u_i w_i \, dx=1-1=0 \qquad \text{ whenever }u\in \Mah.
$$
This together with \eqref{eq:f(s)t+f(t)s} yields
\begin{eqnarray*}
\langle \nabla J(u),V(u)\rangle_{H^1_0(\Omega)}&=& 2\sum_{i=1}^m \int_\Omega(\nabla u_i\cdot \nabla (u_i-w_i)+g(u_i)(u_i-w_i)+ f(u_i) (u_i-w_i)\sum_{j\neq i}F(u_j))\, dx\\
										       &\geq&2 \sum_{i=1}^m \int_\Omega (\nabla u_i \cdot \nabla (u_i-w_i) +g(w_i)(u_i-w_i)+ f(w_i)(u_i-w_i)\sum_{j\neq i}F(u_j))\, dx\\
										       &=& 2 \sum_{i=1}^m \int_\Omega (\nabla u_i \cdot \nabla (u_i-w_i)-\nabla w_i\cdot \nabla (u_i-w_i) + \mu_i u_i(u_i-w_i))\, dx\\
										       &=& 2\langle u-w,u-w\rangle_{H^1_0(\Omega)}=2\|V(u)\|_{H^1_0(\Omega)}^2.
\end{eqnarray*}
\end{proof}

With $V$ we can now construct a $J$-decreasing flow for which $\Peh_\delta$ is positively invariant.

\begin{lemma}\label{lem:properties_of_pseudogradient}
There exists a unique global solution $\eta: \R^+\times \Mah \to H^1_0(\Omega; \R^m)$ for the initial value problem
\begin{equation}\label{eq:pseudogradient_flow}
\frac{d}{dt} \eta(t,u)=-V(\eta(t,u));\qquad \eta(0,u)=u\in \Mah.
\end{equation}
Moreover:
\begin{enumerate}
\item[$(i)$] $\eta(t,u)\in \Mah,\forall  t>0,\ u\in \Mah$;
\item[$(ii)$] For each $u\in \Mah$, the map $t\mapsto J(\eta(t,u))$ is nonincreasing.
\item[$(iii)$] There exists $\bar \delta$ such that, for every $\delta<\bar \delta$,
$$
\eta(t,u)\in \Peh_\delta \qquad \text{ whenever } u\in \Mah\cap \Peh_\delta, J(u)\leq \tilde d+1, \text{ and } t>0.
$$
\end{enumerate}
\end{lemma}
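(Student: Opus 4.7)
The plan is to derive local well-posedness from the regularity of $V=\mathrm{Id}-K$, to dispatch $(i)$ and $(ii)$ by one-line ODE computations, and to attack $(iii)$---the only genuinely delicate item---via a continuation argument built on the Duhamel representation of the flow together with a sharpened form of Lemma \ref{lem:K_contracts_neigh_of_cone}.

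First, since $K$ is $C^1$ on the open set $\Mah^\ast$, so is $V$, and Picard--Lindel\"of yields a unique local solution $\eta(\cdot,u)$ for every $u\in\Mah$. For $(i)$, differentiating $\|\eta_i(t,u)\|_{L^2}^2$ along the flow and invoking the constraint $\int_\Omega u_iK_i(u)\,dx=1$ built into \eqref{eq:existence_and_uniqueness_for_K} gives the scalar ODE
$$
\tfrac{d}{dt}\|\eta_i(t,u)\|_{L^2}^2 = -2\bigl(\|\eta_i(t,u)\|_{L^2}^2-1\bigr),
$$
so $\|\eta_i(t,u)\|_{L^2}\equiv 1$ and the trajectory remains on $\Mah$. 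Item $(ii)$ is then immediate from Lemma \ref{lem:V_is_pseudogradient}: $\tfrac{d}{dt}J(\eta)=-\langle\nabla J(\eta),V(\eta)\rangle_{H^1_0}\leq -2\|V(\eta)\|_{H^1_0}^2\leq 0$. Globality follows: since $J\geq \sum_i\|\eta_i\|_{H^1_0}^2$, $(ii)$ bounds the trajectory in $H^1_0$, and Lemma \ref{lem:K_is_compact} then bounds $\|V(\eta)\|_{H^1_0}$, ruling out finite-time blow-up.

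The real obstacle is $(iii)$: because $\Peh=\bigcup_{i=1}^m(\pm\Peh_i)$ is not convex, $\dist_2(\cdot,\Peh)$ is not a seminorm and so cannot be fed directly into a convexity-type inequality. My strategy is to prove instead that each single-cone slab
$$
\Peh_{i,\pm}^\delta := \{v\in\Mah:\|v_i^{\mp}\|_{L^2}<\delta\}
$$
is separately positively invariant, which suffices since $\Peh_\delta\cap\Mah=\bigcup_{i,\pm}\Peh_{i,\pm}^\delta$. The key input is that a closer reading of the proof of Lemma \ref{lem:K_contracts_neigh_of_cone} actually establishes the componentwise implication
$$
u\in\Mah,\ J(u)\leq\tilde d+1,\ \|u_i^-\|_{L^2}<\delta \ \Longrightarrow\ \|K_i(u)^-\|_{L^2}<\delta/2,
$$
for each index $i$ separately (and the analogous statement with signs reversed), provided $\delta$ is below some $\bar\delta$; indeed, the argument given there uses only the bound on $\|u_{1,n}^-\|_{L^2}$, not that $\Peh_1$ is the closest cone. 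Granted this, I would fix $u\in\Peh_{1,+}^\delta$ with $J(u)\leq\tilde d+1$ and consider
$$
T^\ast:=\sup\{t\geq 0:\|\eta_1(s,u)^-\|_{L^2}<\delta\ \text{for all}\ s\in[0,t]\}>0.
$$
For $s<T^\ast$, $(ii)$ preserves $J(\eta(s,u))\leq\tilde d+1$, so the refined implication gives $\|K_1(\eta(s,u))^-\|_{L^2}<\delta/2$. Writing the flow as the Duhamel integral $\eta(t,u)=e^{-t}u+\int_0^t e^{-(t-s)}K(\eta(s,u))\,ds$, and using the pointwise inequality $(a+b)^-\leq a^-+b^-$ together with Minkowski's integral inequality, one obtains
$$
\|\eta_1(t,u)^-\|_{L^2}\leq e^{-t}\|u_1^-\|_{L^2}+\int_0^t e^{-(t-s)}\|K_1(\eta(s,u))^-\|_{L^2}\,ds < e^{-t}\delta+(1-e^{-t})\tfrac{\delta}{2}<\delta
$$
for every $t\in[0,T^\ast]$, contradicting the definition of $T^\ast$ unless $T^\ast=+\infty$. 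Running the same argument for every pair $(i,\pm)$ and taking $\bar\delta$ to be the minimum of the resulting thresholds yields $(iii)$.
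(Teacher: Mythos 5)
Your proof is correct, and for parts of it you follow the paper exactly: local existence via Picard--Lindel\"of from $V\in C^1(\Mah^\ast)$, the scalar ODE $\frac{d}{dt}\|\eta_i\|_{L^2}^2=-2(\|\eta_i\|_{L^2}^2-1)$ for $(i)$, Lemma \ref{lem:V_is_pseudogradient} for $(ii)$, and the bound $\|\eta\|_{H^1_0}^2\leq J(\eta)\leq J(u)$ for globality. Where you genuinely diverge is in $(iii)$. The paper argues infinitesimally: it expands $\eta(t,u)=(1-t)u+tK(u)+{\rm o}(t)$ and applies the convexity-type estimate $\dist_2((1-t)u+tK(u),\Peh)\leq(1-t)\dist_2(u,\Peh)+t\,\dist_2(K(u),\Peh)$ at a boundary point of $\Peh_\delta$. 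As you correctly note, $\Peh$ is a union of convex cones and $\dist_2(\cdot,\Peh)$ is not convex, so that inequality needs to be read as an estimate against the \emph{single nearest cone}; your observation that the proof of Lemma \ref{lem:K_contracts_neigh_of_cone} in fact yields the componentwise statement ($\|u_i^-\|_{L^2}<\delta\Rightarrow\|K_i(u)^-\|_{L^2}<\delta/2$, and likewise with signs reversed) is accurate, since that argument uses only the smallness of $\|u_{1,n}^-\|_{L^2}$ and not minimality over the cones. Your replacement of the ${\rm o}(t)$ boundary argument by the Duhamel representation $\eta(t,u)=e^{-t}u+\int_0^te^{-(t-s)}K(\eta(s,u))\,ds$ combined with a continuation time $T^\ast$ on each slab $\{\|v_i^{\mp}\|_{L^2}<\delta\}$ is a clean, fully rigorous alternative: it avoids both the non-convexity issue and the usual delicacy of upgrading ``the vector field points inward at the boundary'' to genuine positive invariance of the open set, at the modest cost of invoking the variation-of-constants formula and Minkowski's integral inequality. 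Both routes rest on the same key input, Lemma \ref{lem:K_contracts_neigh_of_cone}.
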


\begin{proof}
As $V\in C^1(\Mah^\ast)$, there exists a solution $\eta:[0,T_{max})\times \Mah^\ast \to H^1_0(\Omega;\R^m)$, where $T_{max}$ is the maximal time of existence of solution. We have
\begin{eqnarray*}
\frac{d}{dt} \int_\Omega \eta_i^2(t,u)\, dx &=& -2\int_\Omega \eta_i(t,u)V_i(\eta(t,u))\, dx\\
								&=&  2\int_\Omega \eta_i(t,u)K_i(\eta(t,u))\, dx-2\int_\Omega \eta^2_i(t,u)\, dx\\
								&=& 2-2\int_\Omega \eta^2_i(t,u)\, dx
\end{eqnarray*}
whence
$$
\frac{d}{dt} \Big( e^{2t}(\int_\Omega \eta_i^2(t,u)\, dx-1)\Bigr)=0.
$$
As $\int_\Omega \eta_i^2(0,u)\, dx=\int_\Omega u_i^2\, dx=1$, we get
$$
\int_\Omega \eta_i^2(t,u)\, dx=1 \qquad \text{ for every }t.
$$
Moreover, from this and Lemma \ref{lem:V_is_pseudogradient} we see that
$$
\frac{d}{dt} J(\eta(t,u))= - \langle \nabla J(\eta(t,u)),V(\eta(t,u))\rangle_{H^1_0(\Omega)}\leq -2 \| V(\eta(t,u))\|_{H^1_0(\Omega)}\leq 0.
$$
In particular, $\|\eta(t,u)\|^2_{H^1_0(\Omega)}\leq J(\eta(t,u))\leq J(u)<+\infty$ and thus $T_{max}=+\infty$ and $(i),(ii)$ hold.

(iii) Take $\bar \delta>0$ so that \eqref{eq:K_contracts_neigh_of_cone} holds for every $\delta<\bar \delta$. For every $u\in \Mah$ such that $J(u)\leq \tilde d+1$ and
$\dist_2(u,\Peh)=\delta<\bar \delta$, since
\begin{equation*}
\eta(t,u)= \eta(0,u)+t\dot \eta(0,u)+{\rm o}(t) = u-t V(u)+{\rm o}(t) \quad \text{ as } t\to 0,
\end{equation*}
we see that
\begin{eqnarray*}
\dist_2(\eta(t,u),\Peh)&=& \dist_2(u-t(u-K(u))+{\rm o}(t),\Peh)\\
			    &=&\dist_2((1-t)u+tK(u)+{\rm o}(t),\Peh)\\
			    &\leq& (1-t) \dist_2(u,\Peh)+t\dist_2(K(u),\Peh)+{\rm o}(t)\\
			    &<& (1-t)\delta + t\delta/2+{\rm o}(t)<\delta
\end{eqnarray*}
for sufficiently small $t>0$, and the conclusion follows.
\end{proof}

We can now conclude the section with a proof of the desired result.
\begin{proof}[Proof of Theorem \ref{thm:sign-changing_solution}]
Take any $\delta<\min\{\sqrt{2}/2,\bar \delta\}$ and denote $d_\delta^{k_1,\ldots,k_m}$ simply by $d$. In view of a contradiction, suppose there exists $0<\ep<1$ such that
\begin{equation}\label{eq:deformation_arg_contradiction}
\|V(u)\|^2_{H^1_0(\Omega)}\geq \ep,\qquad \forall u\in \Mah:\ |J(u)-d|\leq 2\ep,\quad \dist_2(u,\Peh)\geq \delta.
\end{equation}
Let us take any $A\in \Gamma^{(k_1,\ldots,k_m)}$ such that
$$
\sup_{A\setminus \Peh_\delta} J<d+\ep\leq \tilde d+1
$$
and consider $B:=\eta(1,A)$, where $\eta$ is defined by \eqref{eq:pseudogradient_flow}. From Lemmas \ref{lem:properties_of_vector_genus} and \ref{lem:A_setminus_Cone_is_nonempty} we know that $B\in \Gamma^{(k_1,\ldots,k_m)}$ and that $B\setminus \Peh_\delta\neq \emptyset$. Take $u\in A$ such that $\eta(1,u)\not\in \Peh_\delta$ and
$$
d-\ep\leq \sup_{B\setminus \Peh_\delta}J-\ep\leq J(\eta(1,u)).
$$
Since $\Peh_\delta$ is positively invariant for the flow $\eta$ (cf. Lemma \ref{lem:properties_of_pseudogradient}), we see that $\eta(t,u)\not\in \Peh_\delta$ for every $t\in [0,1]$. Moreover,
\begin{eqnarray*}
d&\leq& \sup_{B\setminus \Peh_\delta} J \leq J(\eta(1,u))+\ep \leq J(\eta(t,u))+\ep \leq J(u)+\ep\\
	   &\leq& \sup_{A\setminus \Peh_\delta} J +\ep < d+2\ep.
\end{eqnarray*}
We conclude from \eqref{eq:deformation_arg_contradiction} that $\| V(\eta(t,u))\|^2_{H^1_0(\Omega)}\geq  \ep$ for every $t\in [0,1]$ and
\begin{eqnarray*}
\frac{d}{dt} J(\eta(t,u)) &=& -\langle \nabla J(\eta(t,u)),V(\eta(t,u))\rangle_{H^1_0(\Omega)}\\
					     &\leq & -2 \| V(\eta(t,u))\|_{H^1_0(\Omega)}^2\leq -2\ep \qquad \forall t\in [0,1].
\end{eqnarray*}
Whence, after an integration,
$$
d-\ep\leq J(\eta(1,u))\leq J(u)-2\ep< d-\ep.
$$
Thus \eqref{eq:deformation_arg_contradiction} implies a contradiction and therefore we can find a sequence $u_n\in \Mah$ such that
$$
J(u_n) \to d, \quad V(u_n)\to 0 \quad \text{and}\quad \dist_2(u_n,P)\geq \delta.
$$
Lemma \ref{lem:palais_smale} now implies the existence of $u\in \Mah$ such that, up to a subsequence, $u_n\to u$ strongly in $H^1_0(\Omega;\R^m)$. Hence $J(u)=d$, $V(u)=0$, and $\dist_2(u,P)\geq \delta$, which yields the desired result.
\end{proof}

We have deduced that for each $f,g$ and $k_1,\ldots, k_m$ there exists $\delta=\delta(f,g,k_1,\ldots,k_m)<\min\{\sqrt{2}/2,\bar \delta\}$ such that $d_\delta^{k_1,\ldots, k_m}$ is a critical level for $J|_\Mah$. From now on we will denote such level simply by $d^{k_1,\ldots, k_m}$.

\subsection{Existence of infinitely many sign-changing solutions of \eqref{eq: BEC_more_general}}

In this subsection we will prove Theorem \ref{thm:infinitely_many_general}. For that, we will prove that $d^{k_1,\ldots,k_m}\to +\infty$ as some $k_i\to +\infty$.

\begin{lemma}\label{lemma:lower_bound}
Let $k_1,\ldots,k_m\geq 2$. Then
$$
d^{k_1,\ldots,k_m}\geq \sum_{i=1}^m \lambda_{k_i-1}(\Omega).
$$
\end{lemma}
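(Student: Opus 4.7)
The plan is to produce, for every $A\in\Gamma^{(k_1,\ldots,k_m)}$, a distinguished element $\bar u\in A\setminus\Peh_\delta$ at which $J$ is at least $\sum_i \lambda_{k_i-1}(\Omega)$; taking the supremum over $A\setminus\Peh_\delta$ and then the infimum over $A$ yields the claim. Since $F,G\geq 0$ one has the basic inequality
$$
J(u)\geq \sum_{i=1}^m\int_\Omega |\nabla u_i|^2\,dx,
$$
so it suffices to find $\bar u\in A\setminus\Peh_\delta$ with $\int_\Omega|\nabla\bar u_i|^2\,dx\geq \lambda_{k_i-1}(\Omega)$ for each $i$.

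The key step is to design an admissible test map $f\in F_{(k_1,\ldots,k_m)}(A)$ whose zero simultaneously forces $\bar u\notin\Peh_\delta$ and enforces an $L^2$-orthogonality strong enough to give the Rayleigh-quotient lower bound. Let $(e_j)_{j\geq 1}$ be an $L^2$-orthonormal basis of eigenfunctions of $(-\Delta,H^1_0(\Omega))$ associated with $\lambda_1(\Omega)\leq\lambda_2(\Omega)\leq\cdots$. For $i=1,\ldots,m$ define
$$
f_i:A\to\R^{k_i-1},\qquad f_i(u):=\Bigl(\int_\Omega u_i|u_i|\,dx,\int_\Omega u_i e_1\,dx,\ldots,\int_\Omega u_i e_{k_i-2}\,dx\Bigr),
$$
with the convention that for $k_i=2$ only the first entry $\int_\Omega u_i|u_i|\,dx$ is present. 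Each entry is odd in $u_i$ and invariant under $u_j\mapsto-u_j$ for $j\neq i$, so $f=(f_1,\ldots,f_m)\in F_{(k_1,\ldots,k_m)}(A)$. By the definition of $\Gamma^{(k_1,\ldots,k_m)}$ there exists $\bar u\in A$ with $f(\bar u)=0$.

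From $\int_\Omega \bar u_i|\bar u_i|\,dx=0$ and $\|\bar u_i\|_{L^2(\Omega)}=1$ we get $\|\bar u_i^+\|_{L^2(\Omega)}=\|\bar u_i^-\|_{L^2(\Omega)}=1/\sqrt{2}$, hence exactly as in Lemma \ref{lem:A_setminus_Cone_is_nonempty} one has $\dist_2(\bar u,\Peh)=\sqrt{2}/2>\delta$, so $\bar u\in A\setminus\Peh_\delta$. The remaining $k_i-2$ components of $f_i(\bar u)=0$ say that $\bar u_i$ is $L^2$-orthogonal to $\mathrm{span}\{e_1,\ldots,e_{k_i-2}\}$; combined with $\|\bar u_i\|_{L^2(\Omega)}=1$, the Courant--Fischer characterisation of the eigenvalues of the Dirichlet Laplacian yields
$$
\int_\Omega|\nabla\bar u_i|^2\,dx\;\geq\;\lambda_{k_i-1}(\Omega).
$$
Summing over $i$ and using the nonnegativity of $F$ and $G$ gives $J(\bar u)\geq\sum_{i=1}^m\lambda_{k_i-1}(\Omega)$, so
$$
\sup_{A\setminus\Peh_\delta} J\;\geq\; J(\bar u)\;\geq\;\sum_{i=1}^m \lambda_{k_i-1}(\Omega),
$$
and taking the infimum over $A\in\Gamma^{(k_1,\ldots,k_m)}$ proves the lemma.

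The only non-routine point is the design of the test map $f$: one has to spend exactly one of the $k_i-1$ output coordinates on the ``symmetrising'' functional $\int u_i|u_i|\,dx$ (to keep the extremiser out of $\Peh_\delta$), which is why the remaining orthogonality is against only $k_i-2$ eigenfunctions and produces $\lambda_{k_i-1}$ rather than $\lambda_{k_i}$. Everything else is a direct application of Theorem \ref{thm:borsuk_ulam} through the definition of $\vec\gamma$.
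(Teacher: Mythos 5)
Your proof is correct and follows essentially the same route as the paper: the same test map (with the coordinates merely reordered, putting $\int_\Omega u_i|u_i|\,dx$ first instead of last), the same use of the vector genus to produce a zero $\bar u$, the same conclusion that $\bar u\notin\Peh_\delta$ since $\delta<\sqrt{2}/2$, and the same Courant--Fischer lower bound from orthogonality to the first $k_i-2$ eigenfunctions. No substantive difference.
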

\begin{proof}
Let $\{\varphi_k\}_k$ be the sequence of eigenfunctions of $(-\Delta, H^1_0(\Omega))$, normalized in $L^2(\Omega)$, associated to the eigenvalues $\{\lambda_k\}_k$. Take $A\in \Gamma^{(k_1,\ldots, k_m)}$ and consider, for each $i$, the function
\begin{eqnarray*}
&g_i:A\to \R^{k_i-1}&\\
&u\mapsto \Bigl(\int_\Omega u_i \varphi_1\, dx,\ldots, \int_\Omega u_i \varphi_{k_i-2}\, dx,\int_\Omega u_i |u_i|\, dx  \Bigr).&
\end{eqnarray*}
Then $g:=(g_1,\ldots,g_m)$ belongs to $F_{(k_1,\ldots,k_m)}(A)$ and there exists $\bar u\in A$ such that $\bar u_i\in \text{span}\{\varphi_1,\ldots,\varphi_{k_i-2}\}^\bot$ and $\bar u_i\not\in \Peh_\delta$, as $\delta<\sqrt{2}/2$. Thus
$$
\sup_{u\in A\setminus \Peh_\delta} J(u)\geq J(\bar u) \geq \sum_{i=1}^m \int_\Omega |\nabla \bar u_i|^2\, dx\geq \sum_{i=1}^m \lambda_{k_i-1}(\Omega),
$$
and the result follows.
\end{proof}

\begin{proof}[Proof of Theorem \ref{thm:infinitely_many_general}]
We have $\lambda_k(\Omega)\to +\infty$ as $k\to +\infty$, whence $d^{k_1,\ldots, k_m}\to +\infty$ as $k_i\to +\infty$ for some $i$ and the result follows.
\end{proof}

\section{Proof of Theorem \ref{thm:main1}}\label{sec:third_section}

Theorem \ref{thm:main1} is not an immediate consequence of Theorem \ref{thm:infinitely_many_general}. In fact, by choosing $f(t)=\sqrt{2\beta} t$ and $g(t)=a_i t^3$, we see that they do not satisfy condition $(fg2)$. Moreover, for $N\geq 5$ the associated energy functional
$$
J_\beta(u)=\sum_{i=1}^m \int_\Omega  (|\nabla u_i|^2 + \frac{a_i u_i^4}{2})\, dx + \mathop{\sum_{i,j=1}^m}_{i\neq j} \frac{\beta}{2}\int_\Omega  u_i^2 u_j^2 \, dx
$$
might take the value $+\infty$. We overcome these problems by considering suitable truncatures of the functions $t\mapsto t$ and $t\mapsto t^3$. Fix any $1<p<\min\{2^\ast/2, 3\}$ and $1<q<\min\{2^\ast,3\}$. Given $n\in \N$ we define the odd $C^1$ functions
$$
f_n(t)=\left\{
\begin{array}{cl}
t, & |t|\leq n\\[10pt]
\displaystyle \frac{t|t|^{p-2}}{(p-1)n^{p-2}} + n -\frac{n}{p-1}, & t\geq n\\[10pt]
\displaystyle \frac{t|t|^{p-2}}{(p-1)n^{p-2}} + \frac{n}{p-1}-n,  &t\leq -n
\end{array}
\right.
$$
$$
g_n(t)=\left\{
\begin{array}{cl}
t^3, & |t|\leq n\\[10pt]
\displaystyle \frac{3t|t|^{q-2}}{(q-1)n^{q-4}} + n^3 -\frac{3n^3}{q-1}, & t\geq n\\[10pt]
\displaystyle \frac{3t|t|^{q-2}}{(q-1)n^{q-4}} + \frac{3n^3}{q-1}-n^3,  &t\leq -n.
\end{array}
\right.
$$

and their primitives $F_n(t):=\int_0^t f_n(\xi)\, d\xi$, $G_n(t):=\int_0^t g_n(\xi)\, d\xi$.

\begin{lemma}\label{lemma:properties_of_f_n} The functions $f_n$ and $g_n$ satisfy the following properties:
\begin{itemize}
\item[$(i)$] for every $n\in \N$ there exists $C>0$ such that
$$
|f_n(t)|\leq C(1+|t|^{p-1}),\quad |g_n(t)|\leq C(1+|t|^{q-1}) \quad \text{ for every $t\in \R$};
$$
\item[$(ii)$] there exists $\theta>0$ independent of $n$ such that
\begin{equation}\label{eq:f_n-1}
f_n(t)t\leq \theta F_n(t) \quad \text{ and }\quad g_n(t)t\leq \theta G_n(t) \quad \text{ for every $t\in \R$};
\end{equation}
\item[$(iii)$] we have
\begin{equation*}\label{eq:f_n-2}
f_n(s)t+f_n(t)s\leq f_n(s)s+f_n(t)t\quad \text{ and } \quad g_n(s)t+g_n(t)s\leq g_n(s)s+g_n(t)t
\end{equation*}
for every $s,t\in \R$;
\item[$(iv)$] $g_n(t)\leq t^3$ for every $t\geq 0$.
\end{itemize}
\end{lemma}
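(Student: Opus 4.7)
The plan is to verify the four properties by direct case analysis, using the explicit piecewise definitions of $f_n, g_n$ together with the fact that each of these functions is odd, of class $C^1$, and (as one checks from the formulas) nondecreasing. Since both functions are odd, the inequalities in $(ii)$, $(iii)$ and $(iv)$ only need to be checked for nonnegative arguments, with the cases $0\le t\le n$ and $t\ge n$ handled separately. Throughout I will use that $1<p<\min\{2^\ast/2,3\}$ and $1<q<\min\{2^\ast,3\}$, so in particular $p-2,q-2<1$ and $q-4<0$.

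For part $(i)$, the bound is immediate from the definitions. On $[-n,n]$ we have $|f_n(t)|=|t|\le n\le C(1+|t|^{p-1})$ and $|g_n(t)|=|t|^3\le n^3\le C(1+|t|^{q-1})$, while for $|t|\ge n$ the leading terms are respectively $|t|^{p-1}/((p-1)n^{p-2})$ and $3|t|^{q-1}/((q-1)n^{q-4})$, which absorb the additive constants into $C(1+|t|^{p-1})$ and $C(1+|t|^{q-1})$. The constant $C$ depends on $n$, but this is allowed.

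The heart of the lemma is $(ii)$, since the constant $\theta$ must be uniform in $n$. I will show it works with $\theta=\max\{4,p,q\}$. Define $h_n(t):=\theta F_n(t)-f_n(t)t$ on $[0,\infty)$; the strategy is to prove $h_n\ge 0$ by showing $h_n(0)=0$ and $h_n'(t)\ge 0$ for $t\ge 0$. A direct computation gives $h_n'(t)=(\theta-1)f_n(t)-f_n'(t)t$. For $0\le t\le n$ this equals $(\theta-2)t\ge 0$, and for $t\ge n$ one obtains
\[
h_n'(t)=\frac{t^{p-1}}{n^{p-2}}\Bigl[\frac{\theta-1}{p-1}-1\Bigr]+(\theta-1)n\frac{p-2}{p-1},
\]
which at $t=n$ reduces again to $(\theta-2)n$ and is increasing in $t$ as soon as $\theta\ge p$. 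Hence the choice $\theta\ge\max\{2,p\}$ secures the inequality for $f_n$, and an entirely analogous computation (with $t^3$ replacing $t$ on $[0,n]$ and $q$ replacing $p$ on $[n,\infty)$) yields the condition $\theta\ge\max\{4,q\}$ for $g_n$. This is the main obstacle of the proof, in that it requires a little care to avoid any $n$-dependence in the constant.

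Part $(iii)$ is a standard rearrangement: the claimed inequality is equivalent to $(f_n(t)-f_n(s))(t-s)\ge 0$, and hence to the monotonicity of $f_n$ (likewise for $g_n$). This is clear from the definitions: $f_n'(t)=1$ on $(-n,n)$, and on $(-\infty,-n)\cup(n,\infty)$ one has $f_n'(t)=|t|^{p-2}/n^{p-2}\ge 0$; the analogue $g_n'(t)\ge 0$ follows in the same way, and continuity at $\pm n$ completes the argument. Finally for $(iv)$, the functions $t^3$ and $g_n(t)$ agree on $[0,n]$, so it suffices to show $g_n'(t)\le 3t^2$ for $t\ge n$. This reduces to $t^{q-4}\le n^{q-4}$, which holds because $q-4<0$ and $t\ge n$. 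Integrating from $n$ to $t$ (using equality at $n$) gives $g_n(t)\le t^3$ on $[n,\infty)$, completing the proof.
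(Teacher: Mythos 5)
Your proof is correct. For parts $(i)$, $(ii)$ and $(iv)$ you follow essentially the same route as the paper: in $(ii)$ your derivative $h_n'(t)=(\theta-1)f_n(t)-f_n'(t)t$ is exactly the paper's auxiliary function $\Lambda(t)=\theta' f_n(t)-f_n'(t)t$ with $\theta=\theta'+1$, and both arguments establish its nonnegativity by a direct check on $[0,n]$ together with monotonicity on $[n,\infty)$ (your computation of the value $(\theta-2)n$ at $t=n$ and of the condition $\theta\geq p$, resp.\ $\theta\geq 4$ and $\theta\geq q$ for $g_n$, is accurate, and the resulting $\theta$ is indeed independent of $n$). Where you genuinely depart from the paper is in part $(iii)$: you observe that
$$
f_n(s)s+f_n(t)t-f_n(s)t-f_n(t)s=(f_n(t)-f_n(s))(t-s),
$$
so the claimed inequality is nothing but the statement that $f_n$ (and likewise $g_n$) is nondecreasing on all of $\R$, which follows at once from $f_n'\geq 0$. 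The paper instead verifies $(iii)$ by a five-case analysis according to the signs of $s,t$ and their positions relative to $\pm n$, using Young-type estimates such as $s^{p-1}t+t^{p-1}s\leq s^p+t^p$ in the regime $s,t\geq n$. Your argument is shorter, avoids all case distinctions, and makes transparent that property $(iii)$ is automatic for any odd $C^1$ nondecreasing function — i.e.\ that it is really a consequence of hypothesis $(fg1)$ together with oddness — whereas the paper's computation is self-contained but more laborious. Both are valid.
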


\begin{proof}
(ii) Since both $f_n(t)t$ and $F_n(t)$ are even, it is enough to check \eqref{eq:f_n-1} for $t\geq 0$. Take $\theta':=\max\{1,p-1\}$, and let us show that
\begin{equation}\label{eq:f_n-auxiliary}
\Lambda(t):=\theta' f_n(t)-f_n'(t)t\geq 0 \qquad \forall t\geq 0.
\end{equation}
For $0\leq t\leq n$, $\Lambda(t)=\theta' t-t=(\theta'-1)t\geq 0$,
while for $t>n$
$$
\Lambda'(t)=\theta' \frac{t^{p-2}}{n^{p-2}}-(p-1)\frac{t^{p-2}}{n^{p-2}}\geq 0
$$
and hence, after an integration, $\Lambda(t)\geq \Lambda(n)\geq 0$.
Therefore \eqref{eq:f_n-auxiliary} holds and then
$$
f_n(t)t\leq (\theta'+1)F_n(t).
$$
The proof for $g_n$ is analogous, taking $\theta:=\theta'+1$ with $\theta':=\max\{3,q-1\}$.

(iii) Let us first consider the case $s,t\geq 0$. If $0\leq s,t\leq n$, we have
$$
f_n(s)t+f_n(t)s=2st\leq s^2+t^2=f_n(s)s+f_n(t)t.
$$
If $s,t\geq n$,
\begin{eqnarray*}
f_n(s)t+f_n(t)s &=&\Bigl(\frac{s^{p-1}}{(p-1)n^{p-2}}+n-\frac{n}{p-1}\Bigr)t + \Bigl(\frac{t^{p-1}}{(p-1)n^{p-2}}+n-\frac{n}{p-1}\Bigr)s\\
		       &=& \frac{s^{p-1}t+t^{p-1}s}{(p-1)n^{p-2}} + \Bigl(n-\frac{n}{p-1}\Bigr)t+ \Bigl(n-\frac{n}{p-1}\Bigr)s\\
		       &\leq& \frac{s^p+t^p}{(p-1)n^{p-2}}+ \Bigl(n-\frac{n}{p-1}\Bigr)t+ \Bigl(n-\frac{n}{p-1}\Bigr)s\\
		       &=& f_n(s)s+f_n(t)t.
\end{eqnarray*}
If $s\geq n$, $0\leq t\leq n$, since
$$
\Bigl( \frac{s^{p-1}}{(p-1)n^{p-2}}+n-\frac{n}{p-1}-t\Bigr)(s-t)\geq 0
$$
then we have
\begin{eqnarray*}
f_n(s)t+f_n(t)s &=&\Bigl(\frac{s^{p-1}}{(p-1)n^{p-2}}+n-\frac{n}{p-1}\Bigr)t+ts\\
			&\leq& \Bigl(\frac{s^{p-1}}{(p-1)n^{p-2}}+n-\frac{n}{p-1}\Bigr)s+t^2\\
			&=& f_n(s)s+f_n(t)t.
\end{eqnarray*}
Hence \eqref{eq:f_n-2} holds for $s,t\geq 0$. Finally, for $s\leq 0,t\geq 0$, we have
$$
f_n(s)t+f_n(t)s\leq 0\leq f_n(s)s+f_n(t)t
$$
while for $s,t\leq 0$
\begin{eqnarray*}
f_n(s)t+f_n(t)s   &=&   f_n(-s)(-t)+f_n(-t)(-s)\\
		         &\leq& f_n(-s)(-s)+f_n(-t)(-t)=f_n(s)s+f_n(t)t.
\end{eqnarray*}
The proof for $g_n$ is analogous.

(iv) We need to check that, for $t\geq n$,
$$
\Theta(t):=t^3-\frac{3t^{q-1}}{(q-1)n^{q-4}}-n^3+\frac{3n^3}{q-1}\geq 0.
$$
Now $\Theta(n)=0$, and
$$
\Theta'(t)=3t^2-\frac{3t^{q-2}}{n^{q-4}}\geq 0 \quad \text{ if and only if } \quad n^{q-4}\geq t^{q-4},
$$
which is true because $q<3$.
\end{proof}

Thus the truncated functions $f_n,g_n$ satisfy $(fg1)-(fg3)$, and hence from Theorem \ref{thm:infinitely_many_general} we immediately deduce, for each $n$, the existence of infinitely many sign-changing solutions of the problem
\begin{equation}\label{eq:BEC_general_with_beta}
-\Delta u_i + a_ig_n(u_i)+2\beta f_n(u_i) \sum_{j\neq i} F_n(u_j) =\lambda^n_{i,\beta} u_i, \qquad u_i \in H^1_0(\Omega).
\end{equation}

More precisely, if for each $n$ we define
$$
J_\beta^n(u)=\sum_{i=1}^m \int_\Omega (|\nabla u_i|^2 + 2a_iG_n(u_i))\, dx + 2\beta \mathop{\sum_{i,j=1}^m}_{j\neq i} \int_\Omega F_n(u_i) F_n(u_j)\, dx
$$
and the minimax levels
$$
c_{\beta,n}^{k_1,\ldots, k_m}=\inf_{A\in \Gamma^{(k_1,\ldots, k_m)}} \sup_{A\setminus \Peh_\delta} J_\beta^n,
$$
there exists an unbounded sequence $(u_\beta)_\beta$ of solutions of \eqref{eq:BEC_general_with_beta} such that $J_\beta^n(u_\beta)=c_{\beta,n}^{k_1,\ldots,k_m}$.

We can easily deduce an upper-bound for these minimax levels, independent of $n$. Indeed, consider the functional
$$
J_\infty(u)=\left\{
\begin{array}{cc}
\displaystyle \sum_{i=1}^m \int_\Omega (|\nabla u_i|^2 + \frac{a_i u_i^4}{2})\, dx & \text{ if } \begin{array}{c}u_i\cdot u_j\equiv0 \ \forall i\neq j,\\\text{ and } \int_\Omega u_i^4<\infty, \end{array}\\[10pt]
+\infty								& \text{otherwise.}
\end{array}
\right.
$$
Using Lemma \ref{lem:properties_of_vector_genus}-(i), one can construct $A\in \Gamma^{(k_1,\ldots,k_m)}$ such that $\sup_A J_\infty <\infty$ (see for instance the proof of Lemma \ref{lemma:relaxed_c_infty} ahead). Then we take
\begin{equation*}
c_\infty^{k_1,\ldots,k_m}:=\min_{A\in \Gamma^{(k_1,\ldots,k_m)}}\sup_{A} J_\infty<\infty.
\end{equation*}
As $2G_n(t)\leq t^4/2$ (\textit{cf.} Lemma \ref{lemma:properties_of_f_n}-(iv)) we have $J_\beta^n(u)\leq J_\infty(u)\ \forall u$, and hence
$$
c_{\beta,n}^{k_1,\ldots,k_m} \leq c_\infty^{k_1,\ldots,k_m} \qquad \text{ for all $\beta>0$ and $n\in \N$}.
$$

 Let
$$
\Kah_{\beta,n}^{k_1,\ldots,k_m}:=\Bigl\{u\in H^1_0(\Omega;\R^m):\ u \text{ satisfies } \eqref{eq:BEC_general_with_beta} \text{ and } J_\beta^n(u_\beta)=c_{\beta,n}^{k_1,\ldots,k_m}   \Bigr\}.
$$

By using a Brezis-Kato type argument, we have the following.

\begin{lemma}[a priori bounds]\label{lemma:brezis_kato} There exists a constant $C=C(\beta,k_1,\ldots,k_m)>0$, independent of $n$, such that
$$
\|u\|_{L^\infty(\Omega)}\leq C
\qquad \text{ for every } u\in \Kah_{\beta,n}^{k_1,\ldots,k_m}.
 $$
\end{lemma}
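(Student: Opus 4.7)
The plan is a Brezis--Kato / Moser iteration that exploits the sign structure of the nonlinear terms in \eqref{eq:BEC_general_with_beta}. First, from $J^n_\beta(u) = c^{k_1,\ldots,k_m}_{\beta,n}\le c^{k_1,\ldots,k_m}_\infty$ and the nonnegativity of $F_n, G_n$, I obtain the $n$-uniform bound $\|u_i\|^2_{H^1_0(\Omega)} \le c^{k_1,\ldots,k_m}_\infty$. Testing \eqref{eq:BEC_general_with_beta} against $u_i$ and invoking property $(ii)$ of Lemma~\ref{lemma:properties_of_f_n} (with $\theta$ independent of $n$) yields
$$
0 < \lambda^n_{i,\beta} = \int_\Omega\!\Big(|\nabla u_i|^2 + a_i g_n(u_i)u_i + 2\beta f_n(u_i)u_i\!\!\sum_{j\neq i}\! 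F_n(u_j)\Big)dx \le \theta\, J^n_\beta(u),
$$
which bounds $\lambda^n_{i,\beta}$ uniformly in $n$.

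The key observation is that, since $f_n, g_n$ are odd and nondecreasing and $F_n\ge 0$, we have $g_n(u_i)u_i\ge 0$ and $f_n(u_i)u_i F_n(u_j)\ge 0$ pointwise. Thus, testing \eqref{eq:BEC_general_with_beta} against $\varphi_L = u_i \min(|u_i|,L)^{2\alpha}\in H^1_0(\Omega)$ and discarding the nonnegative nonlinear contributions, one gets
$$
\int_\Omega \nabla u_i\cdot\nabla\varphi_L\,dx \le \lambda^n_{i,\beta}\int_\Omega u_i^2\min(|u_i|,L)^{2\alpha}\,dx.
$$
Letting $L\to\infty$ by monotone convergence (at each level $\alpha$ for which the limit of the right-hand side is finite) gives
$$
(2\alpha+1)\int_\Omega |u_i|^{2\alpha}|\nabla u_i|^2\,dx \le \lambda^n_{i,\beta}\int_\Omega |u_i|^{2(\alpha+1)}\,dx.
$$
Rewriting the left-hand side as $\tfrac{2\alpha+1}{(\alpha+1)^2}\int_\Omega|\nabla |u_i|^{\alpha+1}|^2\,dx$ and applying the Sobolev embedding $H^1_0(\Omega)\hookrightarrow L^{2^*}(\Omega)$ produces the recursive estimate
$$
\|u_i\|_{L^{2^*(\alpha+1)}}^{2(\alpha+1)} \le C_S\,\frac{(\alpha+1)^2}{2\alpha+1}\,\lambda^n_{i,\beta}\,\|u_i\|_{L^{2(\alpha+1)}}^{2(\alpha+1)},
$$
with all constants independent of $n$.

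Starting from $\|u_i\|_{L^2}=1$ and iterating with $r_{k+1}=(2^*/2)\, r_k$, $r_0=2$, the exponents grow geometrically to infinity, and the resulting product of factors converges since $\sum_k 1/r_k<\infty$; standard Moser iteration then delivers $\|u_i\|_{L^\infty(\Omega)}\le C$ with $C$ depending only on $\beta$ and $k_1,\ldots,k_m$. The main obstacle is keeping the estimates uniform in $n$: the growth bounds in property $(i)$ of Lemma~\ref{lemma:properties_of_f_n} come with $n$-dependent constants and are therefore unusable here. Uniformity is instead obtained by bypassing them entirely --- the favorable sign of the nonlinear terms allows them to be dropped from the iteration, while the $n$-independent constant $\theta$ in property $(ii)$ is all that is needed to control $\lambda^n_{i,\beta}$.
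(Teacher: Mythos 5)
Your proposal is correct and follows essentially the same route as the paper: first bound $\lambda^n_{i,\beta}$ uniformly in $n$ via the energy bound $J^n_\beta(u)\le c^{k_1,\ldots,k_m}_\infty$ together with the $n$-independent constant $\theta$ from Lemma \ref{lemma:properties_of_f_n}-(ii), then run a Brezis--Kato iteration in which the nonnegative terms $g_n(u_i)u_i|u_i|^{2\alpha}$ and $f_n(u_i)u_i|u_i|^{2\alpha}F_n(u_j)$ are simply discarded. Your use of the truncated test function $u_i\min(|u_i|,L)^{2\alpha}$ is a minor (and slightly more careful) technical variant of the paper's direct test with $u_i|u_i|^{\delta}$, but the argument is otherwise identical.
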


\begin{proof}
1. $(\lambda_{i,\beta}^n)_n$ are bounded, independently of $n$. We have
$$
J_{\beta}^n(u)=\sum_{i=1}^m \int_\Omega (|\nabla u_i|^2+2a_i G_n(u_i))\, dx + \sum_{j\neq i}\int_\Omega 2\beta F_n(u_i)F_n(u_j)\, dx =c_{\beta,n}^{k_1,\ldots,k_m}   \leq c_{\infty}^{k_1,\ldots,k_m},
$$
then
\begin{equation}\label{eq:bounds_independent_of_n}
\int_\Omega |\nabla u_i|^2\, dx,\ \int_\Omega a_i G_n(u_i)\, dx, \ \int_\Omega \beta  F_n(u_i)F_n(u_j)\, dx \leq c_{\infty}^{k_1,\ldots,k_m}.
\end{equation}
This, together with Lemma \ref{lemma:properties_of_f_n}-(ii) yields
$$
\int_\Omega a_i g_n(u_i)u_i\, dx \leq \theta \int_\Omega a_i G_n(u_i)\, dx\leq \theta a_i c_\infty^{k_1,\ldots,k_m},
$$
$$
\int_\Omega \beta f_n(u_i)u_i F_n(u_j)\, dx \leq \theta \int_\Omega \beta F_n(u_i)F_n(u_j)\, dx\leq \theta c_{\infty}^{k_1,\ldots,k_m}
$$
and hence
\begin{eqnarray*}
0\leq \lambda_{i,\beta}^n  &=&      \int_\Omega (|\nabla u_i|^2+ a_i g_n(u_i)u_i + 2\beta  f_n(u_i)u_i \sum_{j\neq i} F_n(u_j))\, dx\\
				         &\leq& (1+\theta a_i+ 2\beta(m-1)\theta)c_{\infty}^{k_1,\ldots,k_m}.
\end{eqnarray*}

2. Observe that from \eqref{eq:bounds_independent_of_n} we know there exists $C>0$ independent of $n$ such that
$$
\|u_i\|_{L^2(\Omega)}\leq C \qquad \forall u\in \Kah_{\beta,n}^{k_1,\ldots,k_m}.
$$
Suppose that $u\in L^{2+\delta}(\Omega;\R^m)$ for some $\delta$; we can test \eqref{eq:BEC_general_with_beta} with $u_i |u_i|^\delta$, obtaining
\begin{eqnarray*}
\frac{1+\delta}{(1+\frac{\delta}{2})^2}\int_\Omega |\nabla |u_i|^{1+\frac{\delta}{2}}|^2\, dx
	&\leq& \frac{1+\delta}{(1+\frac{\delta}{2})^2}\int_\Omega |\nabla |u_i|^{1+\frac{\delta}{2}}|^2\, dx +  \int_\Omega a_i g_n(u_i)u_i |u_i|^\delta\, dx\\
	& & +2\beta \int_\Omega f_n(u_i)u_i |u_i|^\delta \sum_{j\neq i} F_n(u_j)\, dx \\
	& = & \lambda_{i,\beta}^n \int_\Omega |u_i|^{2+\delta}\, dx
\end{eqnarray*}
Hence we have
\begin{eqnarray*}
\|u_i\|_{L^{2^\ast(2+\delta)/2}(\Omega)} &\leq& \Bigl( C_S^2 \frac{(1+\frac{\delta}{2})^2}{1+\delta} \Big)^\frac{1}{2+\delta} \Bigl(\lambda_{i,\beta}^n \int_\Omega |u_i|^{2+\delta} \, dx\Bigr)^\frac{1}{2+\delta}\\
						&\leq& \Bigl( C \frac{(1+\frac{\delta}{2})^2}{1+\delta} \Big)^\frac{1}{2+\delta} \|u_i\|_{L^{2+\delta}(\Omega)}.
\end{eqnarray*}
Now we iterate, by letting
$$
\delta(1)=0,\qquad 2+\delta(k+1)=2^\ast(2+\delta(k))/2.
$$
Observe that $\delta(k)\to \infty$, since $\delta(k)\geq (2^\ast/2)^{k-1}$.
We then have
\begin{eqnarray*}
\|u_i\|_{L^{2^\ast(2+\delta)/2}(\Omega)} &\leq& \prod_{j=1}^{k}\left[ C \frac{\left(1+\frac{\delta(j)}{2}\right)^2}{1+\delta(j)} \right]^{\frac{1}{2+\delta(j)}} \|u_i\|_{L^2(\Omega)}\\
			&\leq& \exp\left(\sum_{j=1}^\infty \frac{1}{2+\delta(j)}\log\left[ C \frac{\left(1+\frac{\delta(j)}{2}\right)^2}{1+\delta(j)} \right]   \right)\|u_i\|_{L^2(\Omega)}
\end{eqnarray*}
As $\delta(j)\geq (2^\ast/2)^{j-1}$, we see that
\[
\sum_{j=1}^\infty \frac{1}{2+\delta(j)}\log\left[ C \frac{\left(1+\frac{\delta(j)}{2}\right)^2}{1+\delta(j)} \right] < \infty,
\]
which provides the uniform bound in $L^\infty(\Omega)$
\end{proof}

\begin{proof}[Proof of Theorem \ref{thm:main1}]
Let $C>0$ be the constant appearing in the previous lemma and take $n\geq C$. Then there exist infinitely many sign-changing solutions $u$ of \eqref{eq:BEC_general_with_beta}. By the choice of $n$, each solution of \eqref{eq:BEC_general_with_beta} is also a solution of \eqref{eq:Bose-Einstein}, and the result follows.
\end{proof}


\section{Optimal partition problems. Proof of Theorem \ref{thm:main2}}\label{sec:fourth_section}

The purpose of this section is to prove Theorem \ref{thm:main2}. Hence we consider $a_1,\ldots,a_m=0$, and we are dealing with system \eqref{eq:Bose-Einstein_omega_i=0}. Before concentrating our attention on the case of optimal partition problems involving the second eigenvalue, let us prove some preliminary statements.

\begin{lemma}\label{lemma:relaxed_c_infty}
Let $k_1,\ldots,k_m\in \N$. We have
$$
\inf_{(\omega_1,\ldots,\omega_m)\in \Peh_m(\Omega)} \sum_{i=1}^m \lambda_{k_i}(\omega_i)\geq c_\infty^{k_1,\ldots,k_m},
$$
where
$$
\Peh_m(\Omega)=\Bigl\{ (\omega_1,\ldots,\omega_m):\ \omega_i\subseteq \Omega \text{ are open sets, and } \omega_i\cap \omega_j=\emptyset,\ \forall i\neq j\Bigr\}.
$$
\end{lemma}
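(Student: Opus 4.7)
The plan is to show, for each admissible partition $(\omega_1,\ldots,\omega_m)\in \Peh_m(\Omega)$, the existence of a single set $A\in\Gamma^{(k_1,\ldots,k_m)}$ on which $J_\infty$ is controlled by $\sum_i\lambda_{k_i}(\omega_i)$, and then to pass to the infimum over partitions. We may assume throughout that the left-hand side is finite, since otherwise there is nothing to prove.

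Given such a partition, I would first pick, for each $i$, an $L^2$-orthonormal family $\varphi_1^i,\ldots,\varphi_{k_i}^i$ of Dirichlet eigenfunctions of $-\Delta$ on $\omega_i$ corresponding to the first $k_i$ eigenvalues $\lambda_1(\omega_i)\leq\ldots\leq\lambda_{k_i}(\omega_i)$, and extend them by zero to $\Omega$. Let $E_i:=\mathrm{span}\{\varphi_1^i,\ldots,\varphi_{k_i}^i\}\subset H^1_0(\Omega)$ and $S_i:=\{v\in E_i:\|v\|_{L^2(\Omega)}=1\}$, and set
$$
A:=\prod_{i=1}^m S_i\subset\Mah.
$$
The natural map $\eta_i:S^{k_i-1}\to S_i$ sending a coefficient vector to the corresponding linear combination of the $\varphi_j^i$ is an odd linear homeomorphism, and $A$ is closed as a product of finite-dimensional spheres, so Lemma \ref{lem:properties_of_vector_genus}(i) immediately gives $A\in\Gamma^{(k_1,\ldots,k_m)}$.

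To estimate $\sup_A J_\infty$, take any $u=(u_1,\ldots,u_m)\in A$. Since the $\omega_i$ are pairwise disjoint and each $u_i$ is supported in $\overline{\omega_i}$, the components satisfy $u_iu_j\equiv 0$ for $i\neq j$, so the ``$+\infty$'' branch of $J_\infty$ is not triggered. Writing $u_i=\sum_{j=1}^{k_i}c_j^i\varphi_j^i$ with $\sum_j(c_j^i)^2=1$, the $H^1_0(\omega_i)$-orthogonality of the $\varphi_j^i$ yields
$$
\int_\Omega|\nabla u_i|^2\,dx=\sum_{j=1}^{k_i}(c_j^i)^2\,\lambda_j(\omega_i)\leq \lambda_{k_i}(\omega_i),
$$
which, since $a_i=0$ throughout this section, is exactly the $i$-th summand of $J_\infty(u)$. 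Summing over $i$ gives $J_\infty(u)\leq \sum_i\lambda_{k_i}(\omega_i)$ for all $u\in A$, hence $c_\infty^{k_1,\ldots,k_m}\leq \sup_A J_\infty\leq \sum_i\lambda_{k_i}(\omega_i)$, and the infimum over partitions closes the argument.

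The only mildly delicate point I anticipate is the membership $u_i\in L^4(\Omega)$, which is part of the definition of $J_\infty$ being finite. This is handled by the classical fact that Dirichlet eigenfunctions of $-\Delta$ on a bounded open set belong to $L^\infty$, via Moser iteration applied to the linear equation $-\Delta\varphi_j^i=\lambda_j(\omega_i)\varphi_j^i$ starting from the $L^2$ bound. Beyond this, no real obstacle is expected: the argument is essentially a clean application of the vector-genus machinery built in Section \ref{sec:second_section}, combined with the spectral decomposition underlying the Courant--Fischer characterization of $\lambda_{k_i}$.
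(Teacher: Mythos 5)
Your proof is correct and follows essentially the same route as the paper: build $A=\prod_i S_i$ from the first $k_i$ Dirichlet eigenfunctions of each $\omega_i$, invoke Lemma \ref{lem:properties_of_vector_genus}(i) via the odd homeomorphism with $S^{k_i-1}$, and bound $\sup_A J_\infty$ by $\sum_i\lambda_{k_i}(\omega_i)$ using the spectral expansion. Your extra remark on the $L^\infty$ (hence $L^4$) regularity of the eigenfunctions, needed so that the $+\infty$ branch of $J_\infty$ is not triggered when $N\geq 5$, is a detail the paper leaves implicit, and it is handled correctly.
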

\begin{proof}
Take $(\omega_1,\ldots,\omega_m)\in \Peh_m(\Omega)$ and, for each $i\in \{1,\ldots,m\}$, let $\vphi_1^i,\ldots, \vphi_{k_i}^i$ denote the first $k_i$ eigenfunctions of $(-\Delta, H^1_0(\omega_i))$, normalized in $L^2(\Omega)$. Let
$$
A_i:= \{u\in \text{span}\{ \vphi^i_1,\ldots, \vphi^i_{k_i} \}:\ \|u\|_{L^2(\Omega)}=1\}.
$$
Then there exists an obvious odd homeomorphism between $A_i$ and $S^{k_i-1}$, the unitary sphere in $\R^{k_i}$. Therefore Lemma \ref{lem:properties_of_vector_genus}-(i) applies, yielding that
$$
\prod_{i=1}^m A_i \in \Gamma^{(k_1,\ldots,k_m)}.
$$
As $\omega_i\cap \omega_j=\emptyset$ for $i\neq j$, it is now easy to conclude that
\begin{eqnarray*}
\sum_{i=1}^m \lambda_{k_i}(\omega_i)=\sum_{i=1}^m \max_{u_i\in A_i} \int_\Omega |\nabla u_i|^2\, dx &=&\max_{u\in \prod_{i=1}^m A_i} \sum_{i=1}^m \int_\Omega |\nabla u_i|^2\, dx\\
			&\geq& \inf_{A\in \Gamma^{(k_1,\ldots, k_m)}} \sup_A J_\infty(u)=c_\infty^{k_1,\ldots, k_m}.
\end{eqnarray*}
\end{proof}

\begin{rem}
By combining Lemmas  \ref{lemma:lower_bound} and \ref{lemma:relaxed_c_infty} we know that for each $k_1,\ldots,k_m\geq 2$ there exists a minimax level $c_\beta^{k_1,\ldots,k_m}$ associated with \eqref{eq:Bose-Einstein_omega_i=0} such that
$$
\sum_{i=1}^m \lambda_{k_i-1}(\Omega)\leq c^{k_1,\ldots,k_m}_\beta\leq \inf_{(\omega_1,\ldots,\omega_m)\in \Peh_m(\Omega)} \sum_{i=1}^m \lambda_{k_i}(\omega_i).
$$
Since $\lambda_k \simeq k^{2/N}$, then there exist two constants $C_1,C_2>0$, independent of $k_i$, such that
$$
C_1 \sum_{i=1}^m k_i^{2/N}\leq c^{k_1,\ldots,k_m}_\beta \leq C_2 \sum_{i=1}^m k_i^{2/N}.
$$
These inequalities can be used to estimate how many critical levels there are in each interval $[a,b]\subseteq \R^+$.
\end{rem}

By what we have seen in the previous section, we know that for each $k_1,\ldots, k_m\geq 2$, $\beta>0$, there exists $u_\beta=(u_{1,\beta},\ldots,u_{m,\beta})$, a sign-changing solution of \eqref{eq:Bose-Einstein}, satisfying $J_\beta(u_\beta)\leq c_\infty^{k_1,\ldots, k_m}$. By combining the results in the works \cite{nttv1, tavaresterracini}, we have the following informations about the asymptotic behavior of the solutions $u_\beta$ as $\beta\to +\infty$.

\begin{thm}\label{thm:conclusions_limiting_problem}
There exists a vector Lipschitz function $\overline u=(\bar u_1,\ldots,\bar u_m)\in \Mah$ such that, up to a subsequence,
\begin{enumerate}
\item[$(i)$] $ u_{i,\beta}\to \bar u_i$ in $H^1_0(\Omega)\cap C^{0,\alpha}(\overline \Omega)$ for every $0<\alpha<1$;
\item[$(ii)$] $-\Delta u_i=\lambda_i u_i$ in the open set $\{u_i\neq 0\}$, where $\lambda_i:=\lim_\beta \lambda_{i,\beta}$;
\item[$(iii)$] $\bar u_i\cdot \bar u_j\equiv 0$ and $\displaystyle \int_\Omega \beta u_{i,\beta}^2 u_{j,\beta}^2\to 0$ as $\beta\to +\infty$, whenever $i\neq j$;
\item[$(iv)$] the nodal set $\Gamma_{\bar U}:=\{x\in \Omega:\ \bar u_i(x)=0\}$ consists, up to a set having at most Hausdorff dimension $N-2$, of a union of hypersurfaces of class $C^\infty$.
\end{enumerate}
\end{thm}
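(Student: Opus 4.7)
The plan is to regard Theorem \ref{thm:conclusions_limiting_problem} as the result of plugging the family $\{u_\beta\}$ constructed in Section \ref{sec:third_section} into the general asymptotic machinery of \cite{nttv1, tavaresterracini}. Those papers require two uniform-in-$\beta$ inputs, namely a bound $\|u_{i,\beta}\|_{L^\infty(\Omega)}\leq C$ and a bound $|\lambda_{i,\beta}|\leq C$, and produce exactly items (i)--(iv) as output. So the entire task is to verify those two hypotheses for our specific $u_\beta$, after which the theorem is a citation.

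The bound on the Lagrange multipliers is immediate from the minimax construction. Testing \eqref{eq:Bose-Einstein_omega_i=0} against $u_{i,\beta}$ yields
\[
0\leq \lambda_{i,\beta}=\int_\Omega |\nabla u_{i,\beta}|^2\,dx+\beta\int_\Omega u_{i,\beta}^2\sum_{j\neq i}u_{j,\beta}^2\,dx \leq 2J_\beta(u_\beta)\leq 2 c_\infty^{k_1,\ldots,k_m},
\]
and the right-hand side does not depend on $\beta$. For the $L^\infty$-bound I would rerun the Brezis--Kato/Moser iteration of Lemma \ref{lemma:brezis_kato} directly on \eqref{eq:Bose-Einstein_omega_i=0}: testing against $u_{i,\beta}|u_{i,\beta}|^\delta$ the competition term $\beta u_{i,\beta}^2|u_{i,\beta}|^\delta\sum_{j\neq i}u_{j,\beta}^2$ has a favourable sign and may be discarded, so the bootstrap closes with constants depending only on $|\Omega|$, the Sobolev constant, and the uniform bound on $\lambda_{i,\beta}$ already obtained. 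This yields $\|u_{i,\beta}\|_{L^\infty(\Omega)}\leq C(c_\infty^{k_1,\ldots,k_m},|\Omega|)$ independently of $\beta$. The fact that the same iteration works uniformly in $\beta$ is, in my view, the main (and in fact only) nontrivial analytic obstacle.

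With both hypotheses in hand, I would invoke \cite{nttv1} to extract a subsequence converging strongly in $H^1_0(\Omega)\cap C^{0,\alpha}(\overline\Omega)$ to a limit $\bar u$, with $\bar u_i\bar u_j\equiv 0$ and $\beta\int_\Omega u_{i,\beta}^2 u_{j,\beta}^2\,dx\to 0$ for $i\neq j$; this gives (i) and (iii). The normalization $\|u_{i,\beta}\|_{L^2}=1$ passes to the $L^2$-limit so $\bar u\in\Mah$. Item (ii) follows by passing to the limit in \eqref{eq:Bose-Einstein_omega_i=0} on any compact subset of the open set $\{\bar u_i\neq 0\}$, where by \textbf{uniform} H\"older convergence $u_{j,\beta}\to 0$ for all $j\neq i$, so the competition term vanishes and $\lambda_{i,\beta}\to\lambda_i:=\lim_\beta\lambda_{i,\beta}$. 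Finally, the Lipschitz regularity of $\bar u$ and the structure of the nodal set $\Gamma_{\bar U}$ asserted in (iv) are precisely the regularity theorems proved in \cite{tavaresterracini}, which extend the positive-solution theory of \cite{nttv1} to the sign-changing framework by viewing the positive and negative parts $u_{i,\beta}^\pm$ as $2m$ mutually competing densities; this is the step that makes the analysis compatible with our sign-changing $u_\beta$.
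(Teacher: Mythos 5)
Your proposal is correct and follows essentially the same route as the paper: establish the uniform bound on $\lambda_{i,\beta}$ from $J_\beta(u_\beta)\leq c_\infty^{k_1,\ldots,k_m}$, obtain the uniform $L^\infty$ bound by rerunning the Brezis--Kato iteration of Lemma \ref{lemma:brezis_kato} (discarding the nonnegative competition term), and then cite \cite{nttv1} for (i)--(iii) and \cite{tavaresterracini} for (iv), with the same remark that the results of \cite{nttv1} extend to sign-changing solutions by working with positive and negative parts.
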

\begin{proof}
Let us show first of all that the $\lambda_{i,\beta}$'s appearing in \eqref{eq:Bose-Einstein} are uniformly bounded in $\beta$, and that there exists $C>0$, independent of $\beta$, such that
\begin{equation}\label{eq:conclusion_aux1}
\|u_\beta\|_{L^\infty(\Omega)}\leq C.
\end{equation}
In fact,
$$
J_\beta(u_\beta)=\sum_{i=1}^m \int_\Omega |\nabla u_{i,\beta}|^2\, dx + \mathop{\sum_{i,j=1}^m}_{j\neq i}\frac{\beta}{2}\int_\Omega  u_{i,\beta}^2 u_{j,\beta}^2\, dx \leq C
$$
and hence
$$
\int_\Omega |\nabla u_{i,\beta}|^2\, dx,\ \int_\Omega \beta u_{i,\beta}^2 u_{j,\beta}^2\, dx \leq C\quad \forall i,\ \beta>0.
$$
Therefore $\lambda_{i,\beta}=\int_\Omega (|\nabla u_{i,\beta}|^2 + \beta u_{i,\beta}^2 \sum_{j\neq i}  u_{j,\beta}^2 )\, dx$ is bounded independently of $\beta$. Moreover, $u_\beta$ is a bounded sequence in $H^1_0(\Omega;\R^m)$, thus a Brezis-Kato type argument as the one shown in the proof of Lemma \ref{lemma:brezis_kato} gives \eqref{eq:conclusion_aux1}. Now $(i)$-$(iii)$ follows from \cite[Theorems 1.1 $\&$ 1.2]{nttv1} (see also Remark 3.11) and $(iv)$ is a direct consequence of \cite[Theorem 1.1]{tavaresterracini}. It should be stressed that although in \cite{nttv1} the results are stated for non-negative solutions, they also hold for solutions with no sign-restrictions; all arguments there can be adapted with little extra effort to this more general case, working with the positive and negative parts of a solution.
\end{proof}

Coming to the proof of Theorem \ref{thm:main2}, let us fix from now on $k_1=\ldots=k_m=2$. The importance of having obtained sign-changing solutions is clarified in the following key result.

\begin{lemma}\label{lemma:second_eigenvalue}
Within the notations of the previous theorem, for every $i\in \{1,\ldots,m\}$ we have that
$$
\int_\Omega |\nabla \bar u_i|^2\, dx\geq \lambda_2(\{\bar u_i\neq 0\}).
$$
\end{lemma}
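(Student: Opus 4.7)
My plan is to exploit the sign-changing nature of $\bar u_i$ combined with the Courant--Fischer min-max characterization of $\lambda_2$.

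\medskip

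\textbf{Step 1: The limit remains sign-changing.} The approximating $u_\beta$ is obtained as a critical point at level $d_\delta^{k_1,\ldots,k_m}$, which by construction satisfies $\dist_2(u_\beta,\Peh)\geq \delta$ (cf. the end of the proof of Theorem \ref{thm:sign-changing_solution}). Unravelling the definition of $\Peh$, this means that for every $i$,
$$
\|u_{i,\beta}^+\|_{L^2(\Omega)}\geq \delta \quad\text{ and }\quad \|u_{i,\beta}^-\|_{L^2(\Omega)}\geq \delta.
$$
By the $C^{0,\alpha}$ (or $H^1_0$, or $L^2$) convergence $u_{i,\beta}\to \bar u_i$ given by Theorem \ref{thm:conclusions_limiting_problem}$(i)$, these bounds pass to the limit, so $\bar u_i^\pm\not\equiv 0$.

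\medskip

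\textbf{Step 2: Rayleigh quotients of the positive and negative parts.} Since $\bar u_i\in H^1_0(\Omega)$ vanishes outside $\omega_i:=\{\bar u_i\neq 0\}$, standard capacity arguments give $\bar u_i\in H^1_0(\omega_i)$, and therefore $\bar u_i^\pm\in H^1_0(\omega_i)$ as well. From Theorem \ref{thm:conclusions_limiting_problem}$(ii)$, $-\Delta \bar u_i=\lambda_i \bar u_i$ in $\omega_i$ in the weak sense, so testing with $\bar u_i^+\in H^1_0(\omega_i)$ and with $\bar u_i^-\in H^1_0(\omega_i)$ (noting that $\nabla \bar u_i\cdot \nabla \bar u_i^{\pm}=|\nabla \bar u_i^{\pm}|^2$ a.e.) gives
$$
\int_\Omega |\nabla \bar u_i^{\pm}|^2\,dx = \lambda_i \int_\Omega (\bar u_i^{\pm})^2\,dx.
$$

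\medskip

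\textbf{Step 3: Min-max on a two-dimensional test space.} Because $\bar u_i^+$ and $\bar u_i^-$ have disjoint supports and both are nontrivial by Step 1, the span $V:=\mathrm{span}\{\bar u_i^+,\bar u_i^-\}$ is a two-dimensional subspace of $H^1_0(\omega_i)$. For $u=a\bar u_i^++b\bar u_i^-\in V$, the disjointness of supports kills all cross terms, so by Step 2
$$
\frac{\int_\Omega |\nabla u|^2\,dx}{\int_\Omega u^2\,dx}=\frac{a^2\int_\Omega |\nabla \bar u_i^+|^2\,dx+b^2\int_\Omega|\nabla \bar u_i^-|^2\,dx}{a^2\int_\Omega(\bar u_i^+)^2\,dx+b^2\int_\Omega(\bar u_i^-)^2\,dx}=\lambda_i.
$$
The Courant--Fischer characterization then yields
$$
\lambda_2(\omega_i)\leq \max_{u\in V\setminus\{0\}}\frac{\int_\Omega |\nabla u|^2\,dx}{\int_\Omega u^2\,dx}=\lambda_i.
$$

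\medskip

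\textbf{Step 4: Conclusion.} Using $\bar u\in \Mah$ (so $\|\bar u_i\|_{L^2(\Omega)}=1$) and testing the equation against $\bar u_i$ itself,
$$
\int_\Omega |\nabla \bar u_i|^2\,dx=\lambda_i\int_\Omega \bar u_i^2\,dx=\lambda_i\geq \lambda_2(\omega_i),
$$
which is exactly the claimed inequality.

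\medskip

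The delicate step is Step 1: a priori the $L^\infty$-type sign-changing property $u_{i,\beta}^\pm\not\equiv 0$ need not survive in the limit, and it is essential that the minimax scheme was performed on $A\setminus\Peh_\delta$ so as to produce a uniform $L^2$-distance from the cone $\Peh$. Once this quantitative non-degeneracy is secured, the rest is a clean application of Courant--Fischer to the orthogonal pair $(\bar u_i^+,\bar u_i^-)$.
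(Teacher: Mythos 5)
Your Steps 2--4 are correct and coincide with how the paper itself concludes: once $\bar u_i$ is known to change sign, testing $-\Delta \bar u_i=\lambda_i\bar u_i$ with $\bar u_i^{\pm}\in H^1_0(\{\bar u_i\neq 0\})$ and applying Courant--Fischer to $\mathrm{span}\{\bar u_i^+,\bar u_i^-\}$ yields $\lambda_i\geq \lambda_2(\{\bar u_i\neq 0\})$. The genuine gap is in Step 1. The bound $\dist_2(u_\beta,\Peh)\geq \delta$ does hold for each fixed $\beta$, but the $\delta$ produced by the minimax construction is \emph{not} uniform in $\beta$: it must be chosen below the threshold $\bar\delta$ of Lemma \ref{lem:K_contracts_neigh_of_cone}, and that threshold comes from a compactness/contradiction argument carried out for a \emph{fixed} pair $(f,g)$, hence it depends on $\beta$ (and on the truncation index $n$). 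Nothing excludes $\delta(\beta)\to 0$ as $\beta\to+\infty$, so the inequalities $\|u_{i,\beta}^{\pm}\|_{L^2(\Omega)}\geq \delta(\beta)$ need not survive the limit, and you cannot conclude $\bar u_i^{\pm}\not\equiv 0$ this way. This is not cosmetic: if $\bar u_i$ had a sign and $\{\bar u_i\neq 0\}$ were connected, one would have $\lambda_i=\lambda_1(\{\bar u_i\neq 0\})<\lambda_2(\{\bar u_i\neq 0\})$ and the lemma would fail.

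The paper closes precisely this gap by a different mechanism that uses only the \emph{qualitative} sign-change of $u_{i,\beta}$ for each fixed $\beta$. Since $u_{i,\beta}$ changes sign, it is not a first eigenfunction of the Schr\"odinger operator $-\Delta+\beta\sum_{j\neq i}u_{j,\beta}^2$ on $H^1_0(\Omega)$, hence it is $L^2$-orthogonal to the nonnegative, $L^2$-normalized first eigenfunction $\vphi_{i,\beta}$, whose energy is bounded by $\lambda_{i,\beta}\leq C$ uniformly in $\beta$. Passing to the weak limit, $\bar\vphi_i\geq 0$ has unit $L^2$-norm, vanishes a.e.\ on $\cup_{j\neq i}\{\bar u_j\neq 0\}$ by Fatou's lemma (because $\int_\Omega \beta\,\vphi_{i,\beta}^2\sum_{j\neq i}u_{j,\beta}^2\,dx\leq C$), and therefore, since the nodal set has zero Lebesgue measure, is nontrivial on $\{\bar u_i\neq 0\}$ while still satisfying $\int_\Omega \bar u_i\bar\vphi_i\,dx=0$. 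A nonnegative nontrivial function supported a.e.\ in $\{\bar u_i\neq 0\}$ and orthogonal to $\bar u_i$ forces $\bar u_i$ to change sign there. To repair your proof, replace Step 1 by this argument (or else supply a $\beta$-independent lower bound for $\delta$, which neither your proposal nor the paper provides).
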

\begin{proof}
Observe that $u_{i,\beta}$ is an eigenfunction of the operator $-\Delta +\beta \sum_{j\neq i} u_{j,\beta}^2$ in $H^1_0(\Omega)$ with eigenvalue $\lambda_{i,\beta}$. Since $u_{i,\beta}$ is a sign-changing solution, we have that
$$
\lambda_{i,\beta}>\lambda_1(-\Delta+\beta \sum_{j\neq i} u_{j,\beta}^2,\Omega),
$$
the first eigenvalue of $-\Delta+\beta \sum_{j\neq i} u_{j,\beta}^2$ in $H^1_0(\Omega)$. Moreover, if $\vphi_{i,\beta}\geq 0$ is such that $\|\vphi_{i,\beta}\|_{L^2(\Omega)}=1$ and
$$
-\Delta \vphi_{i,\beta}+\beta \vphi_{i,\beta}  \sum_{j\neq i}u_{j,\beta}^2=\lambda_1(-\Delta+\beta \sum_{j\neq i} u_{j,\beta}^2,\Omega) \vphi_{i,\beta},
$$
then
$$
\int_\Omega u_{i,\beta} \vphi_{i,\beta}\, dx=0.
$$
As
$$
\int_\Omega ( |\nabla \vphi_{i,\beta}|^2+\beta \vphi_{i,\beta}^2\sum_{j\neq i}u_{j,\beta}^2)\, dx = \lambda_1(-\Delta+\beta \sum_{j\neq i} u_{j,\beta}^2,\Omega)<\lambda_{i,\beta}\leq C,
$$
there exists $\bar \vphi_i\geq 0$ with $\|\bar \vphi_i\|_{L^2(\Omega)}=1$ such that $\vphi_{i,\beta}\rightharpoonup \bar \vphi_i$ weakly in $H^1_0(\Omega)$ and moreover by Fatou's Lemma
$$
\int_\Omega \bar \vphi_i^2\sum_{j\neq i}\bar u_j^2\, dx \leq \liminf_{\beta\to +\infty} \int_\Omega \vphi_{i,\beta}^2 \sum_{j\neq i} u_{j,\beta}^2\, dx \leq \lim_{\beta\to+\infty }C/\beta = 0,
$$
hence $\vphi_i=0$ a.e. on $\cup_{j\neq i} \{\bar u_j\neq 0\}$. Since $\Gamma$ has zero Lebesgue measure (recall Theorem \ref{thm:conclusions_limiting_problem}-(iv)) and $\bar \vphi\not\equiv 0$ in $\Omega$, then $\bar \vphi_i\not\equiv 0$ a.e. over $\{\bar u_i\neq 0\}$ and, as
$$
\int_\Omega \bar u_i \bar \vphi_i\, dx=0,
$$
then $\bar u_i\in H^1_0(\{\bar u_i\neq 0\})$ is a sign changing solution of $-\Delta \bar u_i=\lambda_i \bar u_i$, and
$$
\int_\Omega |\nabla \bar u_i|^2\, dx =\lambda_i\geq \lambda_2(\{\bar u_i\neq 0\}).
$$
\end{proof}

Now we are in a position to prove our second main result.

\begin{proof}[Proof of Theorem \ref{thm:main2}]
By combining everything we have done so far, we obtain
\begin{eqnarray*}
\inf_{(\omega_1,\ldots,\omega_m)\in \Peh_m(\Omega)} \sum_{i=1}^m \lambda_2(\omega_i)\geq c_\infty^{2,\ldots,2}\geq J_\beta(u_\beta)= \sum_{i=1}^m \int_\Omega  |\nabla u_{i,\beta}|^2\, dx + \mathop{\sum_{i,j=1}^m}_{j\neq i} \frac{\beta }{2} \int_\Omega  u_{i,\beta}^2 u_{j,\beta}^2\, dx
\end{eqnarray*}
and hence, by Theorem \ref{thm:conclusions_limiting_problem}-(i),(iii) and Lemma \ref{lemma:second_eigenvalue},
\begin{eqnarray*}
\inf_{(\omega_1,\ldots,\omega_m)\in \Peh_m(\Omega)} \sum_{i=1}^m \lambda_2(\omega_i)&\geq& c_\infty^{2,\ldots,2} \geq \lim_{\beta\to +\infty} J_\beta(u_\beta)=\sum_{i=1}^m \int_\Omega |\nabla \bar u_i|^2\\
							&\geq& \sum_{i=1}^m \lambda_2(\{\bar u_i\neq 0\}) \geq \min_{(\omega_1,\ldots,\omega_m)\in \Peh_m(\Omega)} \sum_{i=1}^m \lambda_2(\omega_i).
\end{eqnarray*}
Thus $(\{\bar u_1\neq 0\},\ldots,\{\bar u_m\neq 0\})$ is a solution of \eqref{eq:optimalpartition}, and the result now follows from Theorem \ref{thm:conclusions_limiting_problem}-(iv).
\end{proof}

\subsection{Further extensions}
If $k_1,\ldots,k_m=1$, observe that we can use similar (even easier) arguments without using the cones $\Peh_i$. In this case,
$$
c_\beta^{1,\ldots, 1}:= \inf_{A\in \Gamma^{1,\ldots,1}} \sup_A J_\beta=\inf_\Mah J_\beta
$$
and we can prove the existence of $u_\beta$, solution of \eqref{eq:Bose-Einstein}, such that $J_\beta(u)=c_\beta^{1,\ldots,1}$. Thus
$$
\inf_{(\omega_1,\ldots,\omega_m)\in \Peh_m(\Omega)} \sum_{i=1}^m \lambda_{1}(\omega_i)\geq c_\infty^{1,\ldots,1}\geq \lim_{\beta\to +\infty} J_\beta(u_\beta)=\sum_{i=1}^m \int_\Omega |\nabla \bar u_i|^2\, dx\geq \sum_{i=1}^m \lambda_1(\{\bar u_i\neq 0\})
$$
and hence we get the same result as in Theorem \ref{thm:main2} with \eqref{eq:optimalpartition} replaced by
$$
\inf_{(\omega_1,\ldots,\omega_m)\in \Peh_m(\Omega)} \sum_{i=1}^m \lambda_1(\omega_i).
$$
We recover this way the result already shown in \cite{CL_eigenvalues}.

More generally, we can also replace \eqref{eq:optimalpartition} by the problem
$$
\inf_{(\omega_1,\ldots,\omega_m)\in \Peh_m(\Omega)} \Bigl( \sum_{i=1}^{\overline m} \lambda_1(\omega_i)+\sum_{i=\bar{m}+1}^m \lambda_2(\omega_i)  \Bigr),\qquad \text{ where $\overline m<m$},
$$
getting the same conclusions as before. In fact, the same arguments of Sections \ref{sec:second_section}, \ref{sec:third_section} can be applied, with the difference that we just take
$$
\Peh:=\cup_{i=\overline m+1}^m (\Peh_i\cup -\Peh_i)
$$
in the definition of \eqref{eq:general_minmax_level}.\\

We conjecture that, given arbitrary $k_1,\ldots, k_m\in \N$, there exists $u_\beta=(u_{1,\beta}, \ldots, u_{m,\beta})$ solution of \eqref{eq:Bose-Einstein} with $J_\beta(u_\beta)\leq c_\infty^{k_1,\ldots, k_m}$, and $(\bar u_1,\ldots, \bar u_m)$ a limiting profile in the sense of Theorem \ref{thm:conclusions_limiting_problem}, such that $(\omega_1,\ldots, \omega_m):=(\{\bar u_1\neq 0\},\ldots, \{\bar u_m\neq 0\})$ solves
$$
\inf_{(\omega_1,\ldots, \omega_m)\in \Peh_m(\Omega)} \sum_{i=1}^m \lambda_{k_i}(\omega_i).
$$

{\bf Acknowledgments.}

H. Tavares was supported by FCT, grant SFRH/BPD/69314/2010 and Financiamento Base 2008 - ISFL/1/209.\\


\end{document}